\allowdisplaybreaks \pagestyle{myheadings}
\def\rr{{\mathbb R}}
\def\fz{\infty}
\def\az{\alpha}
\def\supp{{\mathop\mathrm{\,supp\,}}}
\def\dist{{\mathop\mathrm {\,dist\,}}}
\def\loc{{\mathop\mathrm{\,loc\,}}}
\def\Lip{{\mathop\mathrm{\,Lip\,}}}
\def\lz{\lambda}
\def\ez{\epsilon}
\def\kz{\kappa}
\def\gz{{\gamma}}
\def\sz{\sigma}
\def\hs{\hspace{0.3cm}}
\def\r{\right}
\def\lf{\left}
\newtheorem{thm}{Theorem}[section]
\newtheorem{lem}{Lemma}[section]
\newtheorem{prop}{Proposition}[section]
\newtheorem{rem}{Remark}[section]
\numberwithin{equation}{section}
\begin{document}
\arraycolsep=1pt
\author{Renjin Jiang} \arraycolsep=1pt
\title{{\Large\bf Cheeger-harmonic functions in metric measure \\ spaces
revisited }
 \footnotetext{
{\it Key words and phrases. Cheeger-harmonic functions, gradient estimate,
doubling measure, Poincar\'e inequality, curvature}
\endgraf}}

\date{ }
\maketitle
\begin{center}
\begin{minipage}{11cm}\small
{\noindent{\bf Abstract}. Let $(X,d,\mu)$ be a complete metric
measure space, with $\mu$ a locally doubling measure, that supports
a local weak $L^2$-Poincar\'e inequality. By assuming a heat
semigroup type curvature condition, we prove that Cheeger-harmonic
functions are  Lipschitz continuous on $(X,d,\mu)$. Gradient
estimates for Cheeger-harmonic functions and solutions to
a class of non-linear Poisson type equations
are presented. }\end{minipage}
\end{center}
\vspace{0.2cm}

\section{Introduction}
\hskip\parindent
The study of Lipschitz continuity
of Cheeger-harmonic functions was originated by Koskela et. al. \cite{krs}, which can be viewed
as a metric version of Yau's gradient estimate (\cite{ya75,chy}).
In \cite{krs} it is proved that on an Ahlfors regular metric spaces, an $L^2$-Poincar\'e inequality
and a heat semigroup type curvature condition are sufficient to guarantee Lipschitz
continuity of Cheeger-harmonic functions. Later, a quantitative gradient estimate
for Cheeger-harmonic functions was given in \cite{ji2}. The main aim of this paper
is to relax the Ahlfors regularity in \cite{krs,ji2} to doubling of the measure.
Besides this, gradient estimates for a class of non-linear Poisson type equations
are also given.

Let $(X,d,\mu)$ be a complete,
pathwise connected metric space, equipped with a locally doubling
measure $\mu$, i.e., for each $R_0>0$, there exists a constant
$C_d(R_0)$ such that for each $0<r<R_0/2$ and all $x$,
\begin{equation}\label{1.1}
\mu(B(x,{2r}))\le C_d(R_0)\mu(B(x,r)).
\end{equation}
We then call the measure locally $Q$-doubling for some $Q>0$, if
for each $R_0>0$, there exists a constant $C_Q(R_0)$ such that
such that for every $x\in X$ and all $0<r<R\le R_0$, it holds
\begin{equation}\label{1.2}
\mu(B(x,{R}))\le C_Q(R_0)\lf(\frac{R}{r}\r)^Q\mu(B(x,{r})).
\end{equation}
We say that  $\mu$ is globally $Q$-doubling if the above holds with a
constant that is independent of $R_0.$

Throughout the paper, we additionally require that $(X,d,\mu)$
is stochastically complete (see Section 2 below). The requirement comes from the technique used in the proof, and does not
look like a very natural condition; on the other hand, it is satisfied
on metric spaces with (Lott-Sturm-Villani) finite dimensional Ricci curvature
bounded from below.

An $L^2$-Poincar\'e inequality is  needed. Precisely, we assume
that $(X,d,\mu)$ supports a local weak $L^2$-Poincar\'e inequality,
i.e., for each $R_0>0$, there exists $C_P(R_0)>0$ such
that for all Lipschitz functions $u$ and each ball $B(x,{r})=B(x,r)$
with $r<R_0$,
\begin{equation}\label{1.3}
\fint_{B(x,{r})}|u-u_B|\,d\mu\le C_P(R_0)r\lf(\fint_{B(x,{2r})}[\Lip
u]^2\,d\mu\r)^{1/2},
\end{equation}
where and in what follows, $u_B=\fint_B
u\,d\mu=\mu(B)^{-1}\int_{B}u\,d\mu$, and
\begin{equation*}
\Lip u(x)=\limsup_{r\to 0}\sup_{d(x,y)\le r}\frac{|u(x)-u(y)|}{r}.
\end{equation*}
We then say that $(X,d,\mu)$ supports a uniform weak $L^2$-Poincar\'e inequality,
if \eqref{1.3} holds with a uniform constant $C_P$ for all $R_0>0$.
According to \cite{kei03} the Poincar\'e inequality here coincides with the one for all
measurable functions and their upper gradients, as introduced in \cite{hek}.

For a domain $\Omega\subset X$, the Sobolev space $H^{1,2}(\Omega)$
is defined to be the completion of all locally Lipschitz continuous functions $u$
on $\Omega$ under the norm
$$\|u\|_{H^{1,2}(\Omega)}:=\|u\|_{L^2(\Omega)}+\|\Lip u\|_{L^2(\Omega)}.$$
An important fact to us is that for each $u\in H^{1,2}(\Omega)$ we
can assign a (Cheeger) derivative $Du$ by \cite{ch}. This derivative
operator is linear, satisfies the Leibniz rule, and there is an
inner product norm that is comparable to our original norm: for a
locally Lipschitz function $u,$ $Du\cdot Du$ is uniformly almost
everywhere comparable to to the square of the local Lipschitz constant
$\Lip u,$ see Section 2 below. Notice that in many concrete settings,
one can make a different choice of an operator that satisfies the above conditions.
We call any operator $D$
that has the above properties a Cheeger derivative operator.

We next define the {\it
Cheeger-Laplace} equation on $(X,d,\mu)$.
For a domain $\Omega$, we say that $u\in H^{1,2}(\Omega)$ is a solution to
the equation $L u=g(x, u)$ in
$\Omega$, if
\begin{equation*}
-\int_\Omega Du(x)\cdot D\phi(x)\,d\mu(x)=\int_\Omega
g(x,u)\phi(x)\,d\mu(x)
\end{equation*}
holds for all  Lipschitz functions $\phi$ with compact support in $\Omega$,
where $g(x,t)$ is a measurable function defined on $X\times \rr$ and continuous with respect to the variable
$t$. If  $L u=0$ in $\Omega$, then we say that $u$ is {\it Cheeger-harmonic}
in $\Omega$.

The above Dirichlet problem and related parabolic equations
have been widely studied; see \cite{bm,bb10,kms,sal,st1,st2,st3}
for instance. According to \cite{sal,st3},
the doubling condition and validity of an $L^2$-Poincar\'e inequality are equivalent
to a parabolic Harnack inequality, which further
implies an elliptic Harnack inequality and hence the H\"older continuity
of harmonic functions (see \cite{bm,st3}).

However, Lipschitz regularity does not follow from doubling and Poincar\'e
inequality, see the examples from the introduction of \cite{krs}.
Thus, some additional requirement is needed for Lipschitz regularity
of solutions.

For $f$ and $g$ in $H^{1,2}(X)$, define the bilinear form
$\mathscr{E}$ by
$$\mathscr{E}(f,g)=\int_X Df(x)\cdot Dg(x)\,d\mu(x).$$
Corresponding to such a form there exists an infinitesimal generator
$A$ which acts on a dense subspace $D(A)$ of $H^{1,2}(X)$, and there
is a semigroup $\{T_t\}_{t>0}$ generated by $A$; see Section 2 below.

We say that $(X,D,\mu)$ satisfies {\it heat semigroup curvature
condition} for our Cheeger derivative operator $D$, if there exists a
nonnegative and nondecreasing function $c_{\kz}(T)$ on $(0,\fz)$ such that for each
$0<t<T$ and every $g\in H^{1,2}(X)$, it holds
\begin{eqnarray}\label{1.4}
T_t (g^2)(x)-[T_t(g)(x)]^2&&\le (2t+c_{\kz}(T)t^2)T_t(|Dg|^2).
\end{eqnarray}

Let us state the first gradient estimate.
\begin{thm}\label{t1.1}
Let $(X,d)$ be a stochastically complete metric space with a locally $Q$-doubling measure
$\mu$, $Q\in (1,\fz)$. Assume that $(X,d,\mu)$ supports a local
weak $L^2$-Poincar\'e inequality and the heat semigroup curvature condition
\eqref{1.4}.

Let $u$ be a solution
to the equation $Lu=-\lz u$ in $2B$, where $B=B(y_0,R)$ and $\lz\in L^\fz(2B)$.
Then there exists $C=C(Q,C_Q(2R),C_P(2R),\|\lz\|_{L^\fz(2B)} R^2)$ such that
$$\||Du|\|_{L^\fz{(B)}}\le C\lf(\frac 1{R}+\sqrt
{c_{\kz}(R^2)}\r)\fint_{2B}|u|\,d\mu.$$
\end{thm}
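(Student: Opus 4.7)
My plan combines the heat semigroup curvature condition \eqref{1.4} with the elliptic regularity available on doubling metric spaces satisfying the $L^2$-Poincar\'e inequality, in the spirit of \cite{krs,ji2}; the novelty will be to replace their use of pointwise heat kernel estimates (valid only under Ahlfors regularity) by a variance/Fubini argument that uses only mass preservation $T_t 1 = 1$ coming from stochastic completeness.

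As a first step, I would assemble two local preparatory estimates that follow from doubling and the $L^2$-Poincar\'e inequality applied to the equation $Lu=-\lz u$ with $\lz\in L^\fz(2B)$. A De Giorgi--Moser iteration yields the local boundedness
$$\|u\|_{L^\fz(\frac32 B)} \le C \fint_{2B} |u|\, d\mu,$$
and testing the weak formulation against $u\eta^2$, where $\eta$ is a Lipschitz cutoff equal to $1$ on $B$ and supported in $\frac32 B$, yields a Caccioppoli inequality
$$\int_X |Du|^2\eta^2\, d\mu \le \frac{C}{R^2}\int_{\frac32 B} u^2\, d\mu + C\|\lz\|_{L^\fz(2B)}\int_{\frac32 B}u^2\, d\mu,$$
with constants depending only on the data listed in the statement.

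The key step is to use \eqref{1.4} to upgrade this $L^2$ bound on $|Du|$ to a pointwise bound. Set $\wz u := u\eta\in H^{1,2}(X)$ and apply \eqref{1.4} with $g=\wz u$ and $T=R^2$. In the Bakry-type calculus, \eqref{1.4} is known to force a pointwise gradient estimate of the schematic form
$$|DT_t \wz u|^2(x) \le e^{Cc_\kz(R^2)t}\, T_t(|D\wz u|^2)(x), \qquad 0<t<R^2.$$
Comparing $DT_t\wz u$ with $D\wz u$ at a Lebesgue point $x\in B$ of $|Du|^2$, and using the equation $Lu=-\lz u$ on $2B$ to identify $T_t\wz u-\wz u$ modulo commutators with $\eta$, produces a pointwise inequality of the form
$$|Du|^2(x) \le C\, T_t(|D\wz u|^2)(x) + E(x,t),$$
where the error $E(x,t)$ is localized on the shell $\frac32 B\setminus B$ and controlled by $R^{-2}$ times local averages of $u^2$.

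Finally, I would choose $t$ comparable to $R^2$ and exploit mass preservation: since $T_t 1=1$, the quantity $T_t(|D\wz u|^2)(x)$ is a probabilistic average of $|D\wz u|^2$, and a Fubini argument together with the doubling property converts it into $\mu(2B)^{-1}\int_X|D\wz u|^2\,d\mu$ up to a multiplicative constant. Combining with Caccioppoli and the local boundedness gives
$$|Du|^2(x) \le C(R^{-2}+c_\kz(R^2))\lf(\fint_{2B}|u|\,d\mu\r)^2,$$
from which the claimed estimate follows on taking square roots. The main obstacle will be carrying out this Fubini step \emph{without} any pointwise heat kernel bound of the form $p_t(x,y)\le C/\mu(B(x,\sqrt t))$, which is where \cite{krs,ji2} used Ahlfors regularity. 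My strategy is to avoid kernel bounds altogether by testing against $T_t\psi$ for a well-chosen cutoff $\psi$, and to book-keep the commutator $[L,\eta]$ so that $E(x,t)$ contributes only the $R^{-1}\fint_{2B}|u|$ term in the final bound; stochastic completeness enters exactly to ensure that the probabilistic averages from $T_t$ do not leak mass, allowing the doubling constant alone to absorb the geometric loss.
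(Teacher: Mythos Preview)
Your proposal has a genuine gap, and it stems from a misdiagnosis of where Ahlfors regularity was used in \cite{krs,ji2}. Gaussian upper and lower bounds
\[
p(t,x,y)\le C\,\mu(B(x,\sqrt t))^{-1}\exp\{-d(x,y)^2/(C_1 t)\}
\]
are available under doubling plus a Poincar\'e inequality by Sturm's work (see \eqref{2.1}--\eqref{2.2}); the paper uses them freely. The Ahlfors regularity in \cite{krs,ji2} was needed for a much more delicate reason, namely to handle the singularity of $-\partial_t p+A_x p$ at $(0,x_0,x_0)$ in an exact identity for the Caffarelli--Kenig functional (see the Remark after Proposition~\ref{p3.1}). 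So your ``main obstacle'' is not the obstacle.

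This matters because your key step---converting the pointwise quantity $T_t(|D\wz u|^2)(x)$ into $C\mu(2B)^{-1}\int |D\wz u|^2\,d\mu$ by a ``Fubini argument'' without any kernel bound---cannot work. A pointwise value of $T_t f(x)$ is controlled by averages of $f$ only through pointwise information on $p(t,x,\cdot)$; testing against $T_t\psi$ yields integrated, not pointwise, control, and the goal is an $L^\infty$ bound on $|Du|$. With the Gaussian upper bound this step is trivially correct at $t\sim R^2$, but you have explicitly ruled that out. A second, independent problem is the comparison of $D\wz u(x)$ with $DT_t\wz u(x)$ at $t\sim R^2$: for large $t$ there is no reason these are close, and the sketch ``using the equation to identify $T_t\wz u-\wz u$ modulo commutators'' does not explain how a pointwise gradient comparison survives.

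For contrast, the paper's route is to set
\[
J(x_0,t)=\frac1t\int_0^t T_s(|D(u\psi)|^2)(x_0)\,ds,
\]
so that $J(x_0,t)\to|Du(x_0)|^2$ as $t\to0$, bound $J(x_0,R^2)$ directly (Proposition~\ref{p3.1}), and estimate $J'(x_0,t)$ from below using \eqref{1.4} together with the variance bound $T_t((u\psi)^2)-[T_t(u\psi)]^2\le Ct^{\gz}/R^{2\gz}$ coming from the H\"older continuity of $u$ (Lemma~\ref{l3.5}). Integrating $J'$ over $(0,R^2)$ bridges $t\to0$ and $t=R^2$ without ever comparing $D\wz u$ with $DT_t\wz u$, and the Gaussian bounds are used only for localization (Lemma~\ref{l2.2}, Lemma~\ref{l3.4}), which is legitimate under doubling. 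Your Moser/Caccioppoli preliminaries are fine and match Lemma~\ref{l4.1} and Lemma~\ref{l3.3}; what is missing is a mechanism that actually produces a \emph{pointwise} gradient bound, and for that you should either adopt the $J$-functional argument or accept the Gaussian upper bound.
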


The above estimate in particular implies that Cheeger-harmonic
functions are locally Lipschitz continuous under the above assumptions.

Let us revisit an example from \cite{krs}.
Consider the metric space $(\Omega,d)$ with
$\Omega=[-1,1]\times [-1,1]\subset \rr^2$ and $d$ the Euclidean metric.
Let $w(x,y)=\sqrt{|x|}$.
Set $\,d\mu=w\,dx$. Then $(\Omega,d,\mu)$ supports an $L^2$-Poincar\'e inequality and $\mu$ is a
doubling measure. The function $u(x,y)=\mathrm{sgn\,}(x)\sqrt{|x|}$ is harmonic in $\Omega$, but it is
not locally Lipschitz in $\Omega$. It was understood in \cite{krs} that  in order
to deduce Lipschitz regularity, the doubling condition should be strengthened
to Ahlfors regularity.
According to Theorem \ref{t1.1}, the reason that the Lipschitz regularity
of Cheeger-harmonic functions fails is due to lack of lower curvature bounds
rather than to lack of Ahlfors regularity.

We have the following gradient estimates for positive Cheeger-harmonic functions.

\begin{thm}\label{t1.2}
Let $(X,d)$ be a stochastically complete metric space with a locally $Q$-doubling measure
$\mu$, $Q\in (1,\fz)$. Assume that $(X,d,\mu)$ supports a local
weak $L^2$-Poincar\'e inequality and the heat semigroup curvature condition
\eqref{1.4}.
Let $u$ be a positive Cheeger-harmonic function in $2B$, where $B=B(y_0,R)$.

 (i) There exists $C=C(Q,C_Q(2R),C_P(2R))>0$ such that for almost every $x\in B$, it holds
  \begin{equation*}
\frac{|Du(x)|}{u(x)}\le C\lf(\sqrt {c_\kz(R^2)}+\frac{1}{R}\r);
  \end{equation*}

  (ii) If $c_\kz(1)>0$, then there exists a fixed constant $C=C(Q,C_Q(1),C_P(1))$
  such that for almost every $x\in B$, it holds
  \begin{equation*}
\frac{|Du(x)|}{u(x)}\le C\lf(\sqrt{c_\kz(1)}+\frac{1}{R}\r).
  \end{equation*}
\end{thm}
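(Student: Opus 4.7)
The plan is to deduce both statements of Theorem \ref{t1.2} from Theorem \ref{t1.1} (applied with $\lz\equiv 0$) combined with the elliptic Harnack inequality for positive Cheeger-harmonic functions. Under our hypotheses, local doubling together with a local weak $L^2$-Poincar\'e inequality is known to imply a parabolic Harnack inequality (see \cite{sal,st3}), and in particular an elliptic Harnack inequality at every scale $r\le R_0$, with constant $C_H=C_H(Q,C_Q(R_0),C_P(R_0))$ such that $\sup_{B(z,r)}v\le C_H\inf_{B(z,r)}v$ for every positive Cheeger-harmonic $v$ in $2B(z,r)$. This is precisely the mechanism that will let us replace the integral average on the right-hand side of Theorem \ref{t1.1} by the pointwise value $u(x)$.

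For part (i), fix $x\in B$ and set $r=R/8$, so that $B(x,2r)\subset 2B$. Since $u$ is Cheeger-harmonic in $B(x,r)$, Theorem \ref{t1.1} yields
$$|Du(x)|\le C(Q,C_Q(2R),C_P(2R))\lf(\frac{1}{r}+\sqrt{c_\kz(r^2)}\r)\fint_{B(x,2r)}u\,d\mu$$
for almost every $x\in B$. Since $u$ is positive and Cheeger-harmonic on $2B$, the elliptic Harnack inequality applied on $B(x,2r)\subset 2B$ gives $\fint_{B(x,2r)}u\,d\mu\le C_H\,u(x)$. Combining these with $r\asymp R$ and the monotonicity $c_\kz(r^2)\le c_\kz(R^2)$ gives the estimate in (i).

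For part (ii), the strategy is to run the argument of part (i) at unit scale whenever $R$ exceeds $1$. If $R\le 1$, part (i) already suffices: one has $c_\kz(R^2)\le c_\kz(1)$, while the local doubling and Poincar\'e constants at any scale $\le 2$ are controlled by $C_Q(1)$ and $C_P(1)$ via iteration of \eqref{1.1}. If $R>1$, apply the argument of part (i) with $B$ replaced by $B(x,1/2)\subset 2B$; this produces
$$\frac{|Du(x)|}{u(x)}\le C(Q,C_Q(1),C_P(1))\lf(2+\sqrt{c_\kz(1/4)}\r)\le C(Q,C_Q(1),C_P(1))\lf(2+\sqrt{c_\kz(1)}\r),$$
which, using the standing assumption $c_\kz(1)>0$ together with $1/R<1$, is absorbed into the desired form $C'(\sqrt{c_\kz(1)}+1/R)$.

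The main technical nuisance is not in the reduction itself---which is the standard pattern of turning an integral gradient estimate into a pointwise one via Harnack---but in the bookkeeping of how the local constants $C_Q(\cdot)$, $C_P(\cdot)$ and $c_\kz(\cdot)$ propagate under the change of scale from $R$ to the smaller radii on which Theorem \ref{t1.1} is being invoked. The genuinely hard content is already packaged inside Theorem \ref{t1.1}, whose proof via the heat semigroup and the curvature hypothesis \eqref{1.4} constitutes the technical core of the paper.
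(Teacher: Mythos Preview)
Your proposal is correct and follows essentially the same route as the paper: combine the gradient estimate of Theorem~\ref{t1.1} (with $\lz\equiv 0$) with a Harnack-type inequality to convert the integral average $\fint_{2B}u$ into the pointwise value $u(x)$, and then localize to unit scale for part~(ii). The only cosmetic difference is that the paper packages the Harnack step through Lemma~\ref{l4.2} (the weak Harnack inequality for super-harmonic functions), together with the freedom in Lemma~\ref{l4.1} to work with any exponent $p>0$, whereas you invoke the full elliptic Harnack inequality directly; these are interchangeable here.
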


Examples that satisfy assumptions in the above theorems were discussed in
\cite{krs,ji2}. Here we point out that, as a consequence of relaxing
the Ahlfors regularity from \cite{krs,ji2}, the assumptions are satisfied on
finite dimensional Riemannian manifolds with Ricci curvature bounded from below,
weighted Riemannian manifold with Bakry-Emery's curvature bounded from below,
as well as compact Alexandrov
spaces with curvature bounded from below;
see \cite{bak1,be2,chy,gro1,gko11}.

Notice that Zhang and Zhu \cite{zz11} have proved Yau's gradient estimate on
Alexandrov spaces with a new Ricci curvature condition (see \cite{zz10}).

On a complete metric space satisfying Lott-Sturm-Villani's curvature condition
$CD(K,N)$ for some $K\in \rr$ and $N\in (1,\fz)$ (see \cite{lv09,stm5},
in \cite{lv09} only $CD(0,N)$ condition is introduced), Sturm \cite[Corollary 2.4]{stm5} shows that a
local doubling condition holds, and  a global doubling condition holds
if $K\ge 0$. Moreover, it is proved by Rajala \cite{raj1,raj2} that
a local weak $L^2$-Poincar\'e inequality holds on them, and a uniform $L^2$-Poincar\'e
inequality holds if $K\ge 0$.

However, as $CD(K,N)$ conditions include the Finsler geometry,
it is not known if the heat semigroup curvature condition holds under them.  Recently,
Ambrosio et. al. \cite{ags1} (see also \cite{agmr}) introduced a Riemannian
Ricci curvature condition $RCD(K,\fz)$ on metric spaces, under which Bakry-Emery's
curvature condition holds (see \cite[Theorem 6.2]{ags1}) for the {\it minimal
weak upper gradient}.
The heat semigroup curvature condition then holds under $RCD(K,\fz)$ conditions
via an argument of Bakry \cite{bak1}.

Consequently, the gradient estimates in Theorems \ref{t1.1} and \ref{t1.2} apply
on metric spaces satisfying both $CD(K,N)$ and $RCD(K,\fz)$.

The paper is organized as follows. In Section 2, we give some basic notation
and notions for Cheeger derivatives, Dirichlet forms and
heat kernels. Section 3 is devoted to establishing gradient estimates for
equations of type $Lu=g(x,u)$ with the assumption that $g(x,u)$ is bounded.
The main results are proved in Section 4.

Finally, we make some conventions. Throughout the paper, we denote
by $C,c$ positive constants which are independent of the main
parameters, but which may vary from line to line. The symbol
$B(x,R)$ denotes an open ball
with center $x$ and radius $R$, and $CB(x,R)=B(x,CR).$

\section{Preliminaries}

\subsection{Cheeger Derivative in metric measure spaces}
\hskip\parindent The following result due to Cheeger \cite{ch}
gives us a derivative operator on metric measure spaces.

\begin{thm}\label{t2.1}
Assume that $(X,\mu)$ supports a local weak $L^2$-Poincar\'e inequality
and that $\mu$ is doubling. Then there exists $N > 0$,
depending only on the doubling constant and the constants in the
Poincar\'e inequality, such that the following holds. There exists a
countable collection of measurable sets $U_\az$, $\mu(U_\az)>0$ for
all $\az$, and Lipschitz functions $X^\az_1,\cdots,
X^\az_{k(\az)}:\, U_\az\to \rr$, with $1\le k(\az)\le N$ such that
$\mu\lf(X \setminus\cup_{\az=1}^\fz U_\az\r)=0$, and for all $\az$
the following holds: for $f: X\to \rr$ Lipschitz, there exist
$V_\az(f)\subseteq U_\az$ such that $\mu(U_\az\setminus
V_\az(f))=0$, and Borel functions
$b_1^\az(x,f),\cdots,b^\az_{k(\az)}(x,f)$ of class $L^\fz$ such that
if $x\in V_\az(f)$, then
$$\Lip(f-a_1 X_1^\az-\cdots-a_{k(\az)}X^\az_{k(\az)})(x)=0$$
if and only if
$(a_1,\cdots,a_{k(\az)})=(b^\az_1(x,f),\cdots,b^\az_{k(\az)}(x,f))$.
Moreover, for almost every $x\in U_{\az_1}\cap U_{\az_2}$, the
``coordinate functions" $X_i^{\az_2}$ are linear combinations of the
$X_i^{\az_1}$'s.
\end{thm}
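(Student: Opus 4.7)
The statement is Cheeger's differentiation theorem; my plan follows the original strategy from \cite{ch}, which rests on Rademacher-type differentiability for Lipschitz functions in PI spaces and a hard dimension bound derived from doubling and Poincar\'e.

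First I would develop the infinitesimal picture. For a Lipschitz function $f:X\to\rr$ I would introduce the equivalence relation $f\sim_x g$ iff $\Lip(f-g)(x)=0$, and show that at $\mu$-a.e. $x$ the set of equivalence classes of Lipschitz functions forms a real vector space $T_x^\ast X$ on which $\Lip(\cdot)(x)$ is a seminorm. The first substantive step, a generalized Rademacher theorem, is that for Lipschitz $f,g$ one has $\Lip(f+g)(x)\le \Lip f(x)+\Lip g(x)$ at a.e. $x$, and that affine combinations of any finite family $X_1,\dots,X_k$ of Lipschitz functions admit, at a.e.\ point, a \emph{unique} best approximation in $\Lip(\cdot)(x)$. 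This is where the weak $L^2$-Poincar\'e inequality enters crucially, via Cheeger's telescoping/chaining argument: Poincar\'e gives control of oscillation of $f$ by an $L^2$-average of $\Lip f$, which upgrades to a.e. differentiability of $f$ against an arbitrary finite linear test system.

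Next I would prove the key dimension bound: there exists $N=N(C_d,C_P)$ such that any linearly independent system $X_1,\dots,X_k$ of Lipschitz functions on a positive measure set must satisfy $k\le N$. The idea is to suppose otherwise and test on small balls: independence forces the image of $x\mapsto (X_1,\dots,X_k)(x)$ to spread in $\rr^k$, but the doubling measure together with Poincar\'e restricts how rapidly $k$ genuinely independent Lipschitz directions can be realized on a ball; a volume/area comparison (essentially the ``generalized linear combination'' lemma in \cite{ch}) yields a uniform upper bound $N$ on $k$.

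With these two ingredients, the measurable construction is an exhaustion argument. Starting from any Lipschitz function, one chooses $X_1^\alpha$ on a positive-measure set $U_\alpha$ where $\Lip X_1^\alpha>0$; then one adjoins $X_2^\alpha$ Lipschitz on a positive-measure subset where it is independent (in the $\sim_x$ sense) of $X_1^\alpha$; iterating, the process terminates at some $k(\alpha)\le N$ because of the dimension bound. A standard measurable exhaustion covers $X$ up to a null set by countably many such charts $U_\alpha$ with coordinate systems $(X_1^\alpha,\dots,X_{k(\alpha)}^\alpha)$. Given an arbitrary Lipschitz $f$, one sets $b_i^\alpha(x,f)$ to be the unique coefficients of the best linear approximation of $f$ by the $X_i^\alpha$'s at $x$; uniqueness (from Step 1) and the definition ensure the ``if and only if'' characterization, and measurability of $b_i^\alpha(\cdot,f)$ follows from measurable selection applied to the quadratic minimization problem on a finite-dimensional space. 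Finally, the compatibility statement on $U_{\alpha_1}\cap U_{\alpha_2}$ is immediate from the uniqueness of the infinitesimal representation: each $X_i^{\alpha_2}$ has a unique expansion in the $X_j^{\alpha_1}$'s at a.e.\ point, and the coefficients are measurable in $x$.

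The main obstacle is the dimension bound in the second paragraph; the other steps are, once Rademacher differentiability is in hand, structural. Establishing $k\le N$ uniformly is where the full strength of doubling plus Poincar\'e is used, and it is what ultimately makes the construction intrinsic to the metric measure space rather than dependent on a prescribed atlas.
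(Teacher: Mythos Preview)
The paper does not prove Theorem~\ref{t2.1}; it is quoted verbatim as Cheeger's differentiation theorem from \cite{ch} and used as a black box to obtain the derivative operator $D$. There is therefore no ``paper's own proof'' to compare against. Your outline is a reasonable high-level sketch of Cheeger's original argument in \cite{ch}, which is precisely the reference the paper cites, so in that sense your proposal is aligned with the source the paper relies on.
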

By Theorem \ref{t2.1}, for each Lipschitz function $u$ we can assign
a Cheeger derivative $Du$, and for each
locally Lipschitz $f$, $\Lip f$ is are comparable to $|Du|$
almost everywhere.

By \cite{sh} and \cite{ch}, the Sobolev spaces $H^{1,2}(X)$ are
isometrically equivalent to the Newtonian Sobolev spaces
$N^{1,2}(X)$ defined in \cite{sh}. For a domain $\Omega\subset X$, following \cite{kkm}, we define the
Sobolev space
with zero boundary values $H^{1,2}_0(\Omega)$ to be the space of those
$u\in H^{1,2}(X)$
for which $u\chi_{X\setminus \Omega}$ vanishes except a set of $2$-capacity zero.
 A useful fact is
that the Cheeger derivative satisfies the Leibniz rule, i.e., for
all $u,v\in H^{1,2}(X)$,
$$D(uv)(x)=u(x)Dv(x)+v(x)Du(x).$$

\subsection{Dirichlet forms and heat kernels}
\hskip\parindent In this subsection, we recall the Dirichlet
forms and heat kernels from \cite{bm,st1,st2,st3}.
Define the bilinear form $\mathscr{E}$ by
$$\mathscr{E}(f,g)=\int_X Df(x)\cdot Dg(x)\,d\mu(x)$$
with the domain $D(\mathscr{E})=H^{1,2}(X)$. It is easy to see that
$\mathscr{E}$ is symmetric and closed. Corresponding to such a form
there exists an infinitesimal generator $A$ which acts on a dense
subspace $D(A)$ of $H^{1,2}(X)$ so that for all $f\in D(A)$ and each
$g\in H^{1,2}(X)$,
$$\int_X g(x)Af(x)\,d\mu(x)=-\mathscr{E}(g,f).$$

From \cite{krs}, we have the following Leibniz rule for Dirichlet
forms.
\begin{lem}
If $u,\,v\in H^{1,2}(X)$, and $\phi\in H^{1,2}(X)$ is a bounded
Lipschitz function, then
\begin{equation*}
\mathscr{E}(\phi,uv)=\mathscr{E}(\phi u,v)+\mathscr{E}(\phi
v,u)-2\int_X \phi Du(x)\cdot Dv(x)\,d\mu(x).
\end{equation*}
Moreover, if $u,\,v\in D(A)$, then we can unambiguously define the
measure $A(uv)$ by setting
\begin{equation*}
A(uv)=uAv+vAu+2Du\cdot Dv.
\end{equation*}
\end{lem}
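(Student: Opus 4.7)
The plan is to reduce the whole statement to the pointwise Leibniz rule $D(fg)=fDg+gDf$ for the Cheeger derivative, together with the definition $\mathscr{E}(f,g)=\int_X Df\cdot Dg\,d\mu$. Since $\phi$ is bounded and Lipschitz and $u,v\in H^{1,2}(X)$, the products $\phi u$ and $\phi v$ again lie in $H^{1,2}(X)$ with $D(\phi u)=\phi Du+uD\phi$ and $D(\phi v)=\phi Dv+vD\phi$ a.e., so both $\mathscr{E}(\phi u,v)$ and $\mathscr{E}(\phi v,u)$ are well defined. The product $uv$ need not lie in $H^{1,2}(X)$, so I would read the expression $\mathscr{E}(\phi,uv)$ as the absolutely convergent integral $\int_X D\phi\cdot(uDv+vDu)\,d\mu$; this makes sense because $D\phi\in L^\infty$ and each of $uDv$, $vDu$ belongs to $L^1(X)$ by Cauchy--Schwarz. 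For a cleaner setting, one can first truncate $u,v$ at levels $\pm n$, so that $u_nv_n$ belongs to $H^{1,2}(X)\cap L^\infty$, prove the identity there, and pass to the limit by dominated convergence.

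Next I would execute the main computation. Applying the pointwise Leibniz rule gives
\begin{align*}
\mathscr{E}(\phi u,v)&=\int_X\phi Du\cdot Dv\,d\mu+\int_X uD\phi\cdot Dv\,d\mu,\\
\mathscr{E}(\phi v,u)&=\int_X\phi Du\cdot Dv\,d\mu+\int_X vD\phi\cdot Du\,d\mu.
\end{align*}
Summing the two lines yields $2\int_X\phi Du\cdot Dv\,d\mu+\int_X D\phi\cdot(uDv+vDu)\,d\mu$, and the last integral is exactly $\mathscr{E}(\phi,uv)$ by the Leibniz rule again. Rearranging delivers the first identity. For the second assertion, with $u,v\in D(A)$ I would define the signed measure $A(uv)$ by duality, setting
\begin{equation*}
\int_X\phi\,dA(uv):=-\mathscr{E}(\phi,uv)
\end{equation*}
for bounded Lipschitz test functions $\phi$; inserting the identity just established and then the defining property $\mathscr{E}(\phi w,f)=-\int_X\phi w\,Af\,d\mu$ for $f\in D(A)$ gives
\begin{equation*}
\int_X\phi\,dA(uv)=\int_X\phi\,(uAv+vAu+2Du\cdot Dv)\,d\mu.
\end{equation*}
Density of the test functions implies uniqueness of the measure, which is the meaning of ``unambiguously define''.

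The main obstacle I expect is pinning down the correct functional framework for the pairing $\mathscr{E}(\phi,uv)$: without an $L^\infty$ assumption on $u,v$ the product $uv$ is not a priori in $H^{1,2}(X)$, so one must either read the form symbolically as above or run the truncation argument carefully. A related technical point is extending the pointwise Leibniz rule for $D$ from the Lipschitz case furnished by Theorem~\ref{t2.1} to arbitrary $u,v\in H^{1,2}(X)$; this follows by approximation of $u,v$ by locally Lipschitz functions in the $H^{1,2}$-norm together with the pointwise comparability of $|Du|$ and $\Lip u$ in the Lipschitz regime, but it is the one ingredient that must be handled before the algebraic manipulations above become rigorous.
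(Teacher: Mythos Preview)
Your argument is correct. The paper does not actually prove this lemma; it simply quotes it from \cite{krs}, so there is no ``paper's own proof'' to compare against. Your approach---expanding $\mathscr{E}(\phi u,v)$ and $\mathscr{E}(\phi v,u)$ via the pointwise Leibniz rule $D(\phi u)=\phi Du+uD\phi$, summing, and recognising $\int_X D\phi\cdot(uDv+vDu)\,d\mu$ as $\mathscr{E}(\phi,uv)$---is exactly the computation carried out in \cite{krs}, and your treatment of the second assertion (defining $A(uv)$ by duality against bounded Lipschitz $\phi$ and then invoking the first identity together with $\mathscr{E}(g,f)=-\int_X g\,Af\,d\mu$ for $f\in D(A)$) is likewise the intended reading. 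Your caveats about the product $uv$ not lying in $H^{1,2}(X)$ in general, and the need to justify the Leibniz rule for general $H^{1,2}$ functions by approximation, are appropriate and are handled in \cite{krs} along the same lines you sketch.
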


%
%
%
%
%
%
Associated with the Dirichlet form $\mathscr{E}$, there is a
semigroup $\{T_t\}_{t>0}$, acting on $L^2(X)$.
Moreover, there is a heat kernel $p$ on $X$, which is a measurable
function  on $\rr\times X\times X$ and satisfies
$$ T_tf(x)=\int_X f(y)p(t,x,y)\,d\mu(y)$$ for
every $f\in L^2(X,\mu)$ and all $t\ge 0$, and $p(t,x,y)=0$ for every
$t<0$. Under the facts that the measure on $X$ is locally doubling
and a local $L^2$-Poincar\'e inequality holds, Sturm (\cite{st2,st3})
proved a Gaussian estimate for
the heat kernel, i.e., for each $t<R^2$ and all $x,y\in X$,
\begin{eqnarray}\label{2.1}
  &&C^{-1}\mu(B(x,{\sqrt t}))^{-1/2}\mu(B(y,{\sqrt t}))^{-1/2}\exp\lf\{-\frac{d(x,y)^2}{C_2t}\r\}\nonumber\\
  &&\hs \hs\hs\le p(t,x,y)\le
  C\mu(B(x,{\sqrt t}))^{-1/2}\mu(B(y,{\sqrt t}))^{-1/2}\exp\lf\{-\frac{d(x,y)^2}{C_1t}\r\},
\end{eqnarray}
where is $C$ depends on $C_Q(R)$ and $C_P(R)$. Notice that when
$x,y$ are sufficient close, i.e., $d(x,y)<R$, then \eqref{2.1} can
be written as
\begin{eqnarray}\label{2.2}
  &&C^{-1}\mu(B(x,{\sqrt t}))^{-1}\exp\lf\{-\frac{d(x,y)^2}{C_2't}\r\}\le p(t,x,y)\le
  C\mu(B(x,{\sqrt t}))^{-1}\exp\lf\{-\frac{d(x,y)^2}{C_1't}\r\}.
\end{eqnarray}
Moreover, if the measure is globally doubling and a uniform
$L^2$-Poincar\'e inequality holds, the estimates
\eqref{2.1} and \eqref{2.2} then hold for every $t>0$ and all $x,y\in X$.

By the assumption that the metric space is stochastically complete,
we know that the heat kernel is a probability measure, i.e.,
for each $x\in X$ and $t>0$,
\begin{equation}\label{2.3}
T_t1(x)=\int_X p(t,x,y)\,d\mu(y)=1.
\end{equation}
Notice that heat kernel is a probability measure if the measure
$\mu$ on a ball $B(x,{r})$ does not grow faster than $e^{cr^2}$ (see \cite{st1}).

The following lemma is essentially a Caccioppoli type inequality for
heat equations.
\begin{lem}\label{l2.2}
There exist $c, C>0$ such that for every $x\in X$,
$$\int_0^s \int_{B(x,{2R})\setminus B(x,{R})}|D_yp(t,x,y)|^2
\,d\mu(y)\,dt\le C\mu(B(x,R))^{-1}e^{-{cR^2}/{s}},$$ whenever $R>0$ and
$s\in (0,R^2]$.
\end{lem}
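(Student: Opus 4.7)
\medskip
\noindent\textbf{Proof proposal.} The plan is a standard Caccioppoli-in-time argument for the heat equation combined with the Gaussian upper bound \eqref{2.1} and stochastic completeness \eqref{2.3}.

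First, I would fix $x\in X$, set $u(t,y):=p(t,x,y)$, and pick a Lipschitz cutoff $\eta:X\to[0,1]$ with $\eta\equiv 0$ on $B(x,R/2)$, $\eta\equiv 1$ on $X\setminus B(x,R)$, and $|D\eta|\le C/R$ supported in the annulus $B(x,R)\setminus B(x,R/2)$. Since $t\mapsto u(t,\cdot)$ solves the heat equation in the Dirichlet form sense, applying the Leibniz rule for $\mathscr{E}$ with the test function $u\eta^2$ gives
\begin{equation*}
\tfrac{1}{2}\tfrac{d}{dt}\int_X u^2\eta^2\,d\mu
=\int_X u\eta^2 Au\,d\mu
=-\int_X |Du|^2\eta^2\,d\mu-2\int_X u\eta\,Du\cdot D\eta\,d\mu.
\end{equation*}
Cauchy--Schwarz absorbs the cross term and, after integrating in $t$ from $0$ to $s$, yields the Caccioppoli-type bound
\begin{equation*}
\int_0^s\!\!\int_X |Du|^2\eta^2\,d\mu\,dt
\le \frac{C}{R^2}\int_0^s\!\!\int_{B(x,R)\setminus B(x,R/2)} p(t,x,y)^2\,d\mu(y)\,dt
+\lim_{t\to 0^+}\int_X u(t,\cdot)^2\eta^2\,d\mu,
\end{equation*}
provided the boundary term at $t=0$ vanishes.

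Next I would control the space integral of $p^2$ on the annulus. Using stochastic completeness one has the pointwise bound $\int_A p(t,x,y)^2 d\mu(y)\le (\sup_{y\in A}p(t,x,y))\int_X p(t,x,y)\,d\mu(y)=\sup_{y\in A}p(t,x,y)$. Combining the upper Gaussian bound \eqref{2.1} with local $Q$-doubling (to replace $\mu(B(x,\sqrt t))^{-1/2}\mu(B(y,\sqrt t))^{-1/2}$ by $C(R/\sqrt t)^Q\mu(B(x,R))^{-1}$, since $d(x,y)\le R$ means $\mu(B(y,R))\simeq \mu(B(x,R))$), and absorbing the polynomial factor $(R/\sqrt t)^Q$ into half of the exponential $e^{-d(x,y)^2/(C_1 t)}\le e^{-R^2/(4C_1 t)}$, gives
\begin{equation*}
\sup_{y:\,d(x,y)\ge R/2}p(t,x,y)\le \frac{C}{\mu(B(x,R))}\,e^{-cR^2/t},\qquad 0<t\le s\le R^2.
\end{equation*}
Integrating in $t$ via $\int_0^s e^{-cR^2/t}dt\le s\,e^{-cR^2/s}$ and using $s\le R^2$ to kill the factor $s/R^2$, I obtain
\begin{equation*}
\int_0^s\!\!\int_{B(x,R)\setminus B(x,R/2)} p(t,x,y)^2\,d\mu(y)\,dt\le \frac{C s}{\mu(B(x,R))}\,e^{-cR^2/s}.
\end{equation*}
Since $\eta\equiv 1$ on $B(x,2R)\setminus B(x,R)$, this produces the desired inequality.

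The main technical point to be careful about is the initial boundary term $\lim_{t\to 0^+}\int u^2\eta^2\,d\mu$. Because $\eta$ vanishes on $B(x,R/2)$, on its support one has $d(x,y)\ge R/2$, and the same Gaussian-plus-doubling computation above shows $\int_{d(x,y)\ge R/2} p(t,x,y)^2\,d\mu(y)\le C\mu(B(x,R))^{-1}e^{-cR^2/t}\to 0$ as $t\to 0^+$, so this term drops harmlessly. Apart from this, the only routine checks are the density/approximation needed to legitimately test the heat equation against $u\eta^2$ (standard for functions in $D(\mathscr{E})$ with a bounded Lipschitz factor) and the use of local, as opposed to global, doubling (which is legitimate because both $R$ and $\sqrt s$ are controlled by $R_0\sim R$, so all constants depend only on $C_Q(R)$ and $C_P(R)$).
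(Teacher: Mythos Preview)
Your approach is the same as the paper's---a parabolic Caccioppoli estimate obtained by testing the heat equation against $p(t,x,\cdot)\cdot(\text{cutoff})^2$, followed by the Gaussian upper bound to control the $p^2$ term on the annulus. The one substantive difference is your choice of cutoff, and it creates a gap in the treatment of the $t\to 0^+$ boundary term.

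You take $\eta\equiv 1$ on all of $X\setminus B(x,R)$, so $\eta$ is \emph{not} compactly supported. The cross term in Caccioppoli is fine, since $|D\eta|$ lives in $B(x,R)\setminus B(x,R/2)$; but the initial term $\lim_{t\to 0^+}\int_X p(t,x,\cdot)^2\eta^2\,d\mu$ is an integral over the unbounded set $X\setminus B(x,R/2)$. You justify its vanishing by the bound $\sup_{d(x,y)\ge R/2}p(t,x,y)\le C\mu(B(x,R))^{-1}e^{-cR^2/t}$, which you derive by replacing $\mu(B(x,\sqrt t))^{-1/2}\mu(B(y,\sqrt t))^{-1/2}$ with $C(R/\sqrt t)^Q\mu(B(x,R))^{-1}$ ``since $d(x,y)\le R$ means $\mu(B(y,R))\simeq\mu(B(x,R))$''. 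But on the unbounded region $d(x,y)$ is not bounded by $R$, and under only \emph{local} doubling there is no control on $\mu(B(y,\sqrt t))$ for $y$ far from $x$; the supremum bound is therefore not established as written.

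The paper sidesteps this by using a \emph{compactly supported} cutoff $\phi_x$ with $\phi_x\equiv 1$ on $B(x,2R)\setminus B(x,R)$ and $\operatorname{supp}\phi_x\subset B(x,3R)\setminus B(x,R/2)$. All integrals then live in the bounded annulus $B(x,3R)\setminus B(x,R/2)$, where local $Q$-doubling legitimately gives $\mu(B(y,\sqrt t))^{-1}\le C(R/\sqrt t)^Q\mu(B(x,R))^{-1}$, and both the $p^2$ term and the $t\to 0^+$ boundary term are handled exactly as you outline. If you replace your $\eta$ by such a compactly supported cutoff, your argument is correct and coincides with the paper's.
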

\begin{proof}
Let $x\in X$ be fixed and $$\phi_x(y):=\min\lf\{1, \frac 1R\dist(y,
X\setminus B(x,{3R})), \frac 2R\dist(y, B(x,{R/2}))\r\}$$ for every
$y\in X$. Then $|D\phi_x(y)|\le C/R$ and we have
\begin{eqnarray*}
&&\int_0^s \int_{X}D_yp(t,x,y)\cdot D_y(p(t,x,y)\phi_x^2(y))\,d\mu(y)\,dt\\
&&= \int_0^s \int_{X}\lf\{|D_yp(t,x,y)|^2\phi_x^2(y)-
|D_yp(t,x,y)|p(t,x,y)\phi_x(y)|D\phi_x(y))|\r\}\,d\mu(y)\,dt\\
&&\ge \int_0^s \int_{X}\lf\{\frac 12|D_yp(t,x,y)|^2\phi_x^2(y)-4p(t,x,y)^2|D\phi_x(y)|^2\r\}\,d\mu(y)\,dt\\
&&\ge \frac 12\int_0^s \int_{B(x,{2R})\setminus
B(x,{R})}|D_yp(t,x,y)|^2 \,d\mu(y)\,dt-\frac C{R^2}\int_0^s
\int_{B(x,{3R})\setminus B(x,{R/2})}p(t,x,y)^2\,d\mu(y)\,dt.
\end{eqnarray*}
As for every $y\in \supp \phi_x$, $d(x,y)<3R$. By using
the doubling condition, \eqref{2.2} and \eqref{2.3}, we further deduce that
\begin{eqnarray*}
&&\frac C{R^2}\int_0^s \int_{B(x,{3R})\setminus B(x,{R/2})}p(t,x,y)^2\,d\mu(y)\,dt\\
&&\le \frac C{R^2} \int_0^s \int_{B(x,{3R})\setminus B(x,{R/2})}
\mu(B(x,\sqrt t))^{-1}\exp\lf\{-\frac{d(x,y)^2}{C_1t}\r\}p(t,x,y)\,d\mu(y)\,dt\\
&&\le \frac C{R^2} \int_0^s \mu(B(x,R))^{-1}\frac{R^Q}{t^{Q/2}}
\exp\lf\{-\frac{R^2}{ct}\r\}\lf(\int_{X}p(t,x,y)\,d\mu(y)\r)\,dt\\
&&\le \frac C{R^2} \mu(B(x,
R))^{-1}\exp\lf\{-\frac{R^2}{cs}\r\}\int_0^s\,dt\le  C\mu(B(x,
R))^{-1}\exp\lf\{-\frac{R^2}{cs}\r\}.
\end{eqnarray*}
On the other hand, notice that $\phi_x^2(y)=0$ on $B(x,{R/2})$. By using the property
of heat semigroup, we have
\begin{eqnarray*}
&&\int_0^s \int_{X}D_yp(t,x,y)\cdot D_y(p(t,x,y)\phi_x^2(y))\,d\mu(y)\,dt\\
&&=-\int_0^s \int_{X}\frac{\partial}{\partial
t}p(t,x,y)p(t,x,y)\phi_x^2(y)\,d\mu(y)\,dt =-\frac
12\int_{X}p(s,x,y)^2\phi_x^2(y)\,d\mu(y)\le 0
\end{eqnarray*}
Combining the above estimates, we see that
\begin{eqnarray*}
&&\int_0^s \int_{B(x,{2R})\setminus B(x,{R})}|D_yp(t,x,y)|^2
\,d\mu(y)\,dt \\
&&\le \frac C{R^2}\int_0^s \int_{B(x,{3R})\setminus
B(x,{R/2})}p(t,x,y)^2\,d\mu(y)\,dt
+\int_0^s \int_{X}D_yp(t,x,y)\cdot D_y(p(t,x,y)\phi_x^2(y))\,d\mu(y)\,dt\\
&&\le C\mu(B(x,R))^{-1}\exp\lf\{-\frac{R^2}{cs}\r\},
\end{eqnarray*}
which proves the lemma.
\end{proof}

\section{From H\"older to Lipschitz}
\hskip\parindent
The main aim of this section is to prove the following result.

\begin{thm}\label{t3.1}
Let $(X,d)$ be a stochastically complete metric space with a locally $Q$-doubling measure
$\mu$, $Q\in (1,\fz)$. Assume that $(X,d,\mu)$ supports a local
weak $L^2$-Poincar\'e inequality and the heat semigroup curvature condition
\eqref{1.4}.

Let $u$ be a solution to the equation
$L u=g(x, u)$ in $\Omega\subset X$ with $g(x,u)\in L^\fz_{\loc}(\Omega)$.
Then $u$ is locally Lipschitz in $\Omega$. More precisely, for each ball
$B=B(y_0,{R})$ with $8B\subset\subset \Omega$ there
exists $C=C(Q,C_Q(8R),C_P(8R))$ such that
  \begin{equation*}
\|Du\|_{L^\fz(B)}\le
C\lf(\frac 1{R}+\sqrt{c_{\kz}(R^2)}\r)\lf[\|u\|_{L^\fz(8B)}+ R^2\|g(\cdot,u)\|_{L^\fz(8B)}\r].
  \end{equation*}
  \end{thm}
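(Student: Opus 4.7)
The plan is to compare $u$ on $B$ with the heat-semigroup mollification of a cutoff version of $u$, using the heat semigroup curvature condition \eqref{1.4} at the natural scale $t=R^2$ to bound the Lipschitz constant of the mollification, while controlling the error with the heat-kernel Gaussian bound \eqref{2.2} and the Caccioppoli estimate of Lemma \ref{l2.2}. Fix $x_0\in B$ and a Lipschitz cutoff $\phi$ with $\phi\equiv 1$ on $4B$, $\supp\phi\subset 7B$, and $|D\phi|\le C/R$; set $v:=u\phi\in H^{1,2}(X)$, extended by zero, so $\|v\|_\infty\le\|u\|_{L^\infty(8B)}$.

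\textbf{Step 1: Lipschitz bound for $T_tv$.} Applying \eqref{1.4} to $v$ and combining with the Bakry identity $T_t(v^2)(x_0)-[T_tv(x_0)]^2=2\int_0^t T_s(|DT_{t-s}v|^2)(x_0)\,ds$, the semigroup contractivity, and stochastic completeness \eqref{2.3} (which gives $T_t(v^2)(x_0)\le\|v\|_\infty^2$), extract a pointwise gradient estimate of the form
\begin{equation*}
 |DT_tv(x_0)|^2\le C\lf(\frac{1}{t}+c_\kappa(t)\r)\|v\|_\infty^2.
\end{equation*}
Taking $t=R^2$ gives $|DT_{R^2}v(x_0)|\le C(1/R+\sqrt{c_\kappa(R^2)})\|u\|_{L^\infty(8B)}$.

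\textbf{Step 2: the error $u-T_tv$.} Since $v=u$ on $4B$ and $Lu=g(\cdot,u)$, the Leibniz rule yields $Lv=\phi\,g(\cdot,u)+uL\phi+2Du\cdot D\phi$, whose cutoff contributions are supported in the annulus $A:=7B\setminus 4B$. Duhamel's formula $v-T_tv=-\int_0^t T_s(Lv)\,ds$ decomposes $u-T_tv$ on $B$ into a volume source integral, bounded by $t\|g\|_{L^\infty(8B)}=R^2\|g\|_{L^\infty(8B)}$, and an annular integral. For the latter, since $d(x,A)\ge 3R$ for $x\in B$, integration by parts transfers $D\phi$ onto $D_yp(s,x,y)$, and \eqref{2.2} together with Lemma \ref{l2.2} yields, after differentiating in $x_0$ and estimating Lipschitz constants of $T_sf$ via Step 1 applied to each source contribution, the bound
\begin{equation*}
 \Lip(u-T_tv)(x_0)\le C\lf(\frac{1}{R}+\sqrt{c_\kappa(R^2)}\r)\bigl(\|u\|_{L^\infty(8B)}+R^2\|g\|_{L^\infty(8B)}\bigr);
\end{equation*}
a standard iteration on nested balls absorbs the residual $|Du|$-term coming from $Du\cdot D\phi$ on $A$.

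\textbf{Assembly and principal obstacle.} Combining $\Lip u(x_0)\le\Lip T_tv(x_0)+\Lip(u-T_tv)(x_0)$ and invoking the a.e.\ comparability of $|Du|$ with $\Lip u$ from Theorem \ref{t2.1} yields the theorem. The main difficulty is Step 1: the hypothesis \eqref{1.4} is an integrated inequality, and extracting a sharp pointwise gradient bound on $T_tv(x_0)$ requires a Bakry-style semigroup argument that carefully combines \eqref{1.4} with stochastic completeness and with monotonicity properties of $s\mapsto T_s(|DT_{t-s}v|^2)(x_0)$, with the Gaussian heat-kernel estimate \eqref{2.2} entering to handle the interaction between the localization and the semigroup. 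The subsidiary difficulty lies in Step 2, where the $|Du|$-dependence produced by the cutoff on the annulus creates an apparent circularity that must be removed by iterating the estimate on a nested sequence of concentric balls.
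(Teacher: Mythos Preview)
Your Step 1 has a genuine gap that I do not see how to close from the stated hypotheses. The curvature condition \eqref{1.4} is an \emph{upper} bound on the variance $T_t(v^2)-[T_tv]^2$ in terms of $T_t(|Dv|^2)$; equivalently, a \emph{lower} bound on $T_t(|Dv|^2)$. To extract $|DT_tv(x_0)|^2\le C(1/t+c_\kappa(t))\|v\|_\infty^2$ you need the opposite inequality, namely a \emph{lower} bound on the variance in terms of $|DT_tv|^2$ (a reverse Poincar\'e inequality for the semigroup). In the Bakry identity $T_t(v^2)-[T_tv]^2=2\int_0^t T_s(|DT_{t-s}v|^2)\,ds$ you would need $T_s(|DT_{t-s}v|^2)\ge c\,|DT_tv|^2$, which is exactly the gradient commutation $|DT_s g|^2\le C\,T_s(|Dg|^2)$, or equivalently the monotonicity of $s\mapsto T_s(|DT_{t-s}v|^2)$ that you invoke. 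These are consequences of the pointwise Bakry--\'Emery condition $\Gamma_2\ge K\Gamma$, and while they are \emph{equivalent} to \eqref{1.4} in settings with enough algebraic structure, that equivalence is itself a substantial theorem and is not part of the hypotheses here. If instead you feed $g=v$ directly into \eqref{1.4}, the right-hand side $T_t(|Dv|^2)$ contains $|Du|$ on $\mathrm{supp}\,\phi$, which is circular.

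The paper avoids this obstruction entirely. It never bounds $|DT_tv|$; instead it studies the averaged functional
\[
J(x_0,t)=\frac1t\int_0^t T_s\bigl(|D(u\psi)|^2\bigr)(x_0)\,ds,
\]
for which $\lim_{t\to0}J(x_0,t)=|Du(x_0)|^2$ a.e. The equation (via the identity $|D(u\psi)|^2=\tfrac12 A(u\psi)^2-(u\psi)A(u\psi)$ up to cutoff errors) gives $J(x_0,t)\le \tfrac1{2t}\{T_t((u\psi)^2)-[T_t(u\psi)]^2\}+\text{(error)}$, and then \eqref{1.4} is used only in a case dichotomy to bound $J'(x_0,t)$ from below by $-\tfrac{c_\kappa}{2t}\{T_t((u\psi)^2)-[T_t(u\psi)]^2\}-\text{(error)}$. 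The crucial point, completely absent from your sketch, is that $\int_0^{R^2}\{T_t((u\psi)^2)-[T_t(u\psi)]^2\}\,\tfrac{dt}{t}$ is finite: this is where the preliminary H\"older continuity of $u$ (Lemma \ref{l3.1}) enters, giving a $t^{\gamma}/R^{2\gamma}$ decay of the variance and hence integrability (Lemma \ref{l3.5}). Your Duhamel-based Step 2 is closer in spirit to the paper's handling of the cutoff errors (Lemma \ref{l3.4}), but note that $L\phi$ is in general only a distribution in this setting, so the expression $T_s(uL\phi)$ needs to be interpreted weakly, as the paper does.
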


%

%
%
%
%

Throughout this section, we will always let the assumptions and notions
be the same as in Theorem \ref{t3.1} unless otherwisely stated. Moreover,
we always let $\psi$ be a cut-off function, which is Lipschitz and $\psi=1$ on
$B(y_0,2R)$, $\supp \psi\subset B(y_0,4R)$ and $|D\psi|\le\frac
{C}R$.

The following functional is the main tool for us; see
\cite{ck,krs,ji2}. Let $x_0\in B=B(y_0,R)$. For all $t\in (0,R^2)$,
define
\begin{eqnarray}\label{3.1}
J(x_0,t):=&&\frac{1}{t}\int_0^t\int_X |D(u\psi)(x)|^2
p(s,x_0,x)\,d\mu(x)\,ds.
\end{eqnarray}

%

\begin{lem}\label{l3.1}
The solution $u$ is locally
H\"older continuous in $\Omega$. More precisely, there exists $\gz\in(0,1)$
such that for almost all $x,y\in 2B=B(y_0,{2R})$, it holds
$$|u(x)-u(y)|\le C\lf(\|u\|_{L^\fz(4B)}+R^2\|g(\cdot,u)\|_{L^\fz(4B)}\r)\frac{d(x,y)^\gz}{R^\gz},$$
where $C=C(Q,C_Q(4R),C_P(4R))$.
\end{lem}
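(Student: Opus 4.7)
The plan is to reduce the claim to the known Hölder continuity of Cheeger-harmonic functions---a consequence of the parabolic Harnack inequality \cite{st3} and its elliptic counterpart \cite{bm}, both available under local doubling plus the local weak $L^2$-Poincaré inequality---via a standard harmonic replacement argument that absorbs the bounded inhomogeneity $g(\cdot,u)$.

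Fix $x_0\in 2B$ and $0<r<R$, and write $B_r:=B(x_0,r)$. For each such $r$, solve the Dirichlet problem: find $w_r\in H^{1,2}_0(B_r)$ with
$$-\int_{B_r}Dw_r\cdot D\phi\,d\mu=\int_{B_r}g(x,u)\phi\,d\mu$$
for every Lipschitz $\phi$ compactly supported in $B_r$. Existence and uniqueness follow from standard Dirichlet form theory together with the Riesz representation theorem, using the local $L^2$-Poincaré inequality to make $\mathscr{E}$ coercive on $H^{1,2}_0(B_r)$. Since $g(\cdot,u)\in L^\fz(4B)$, Moser iteration in the metric setting (see \cite{bm,kms}) yields
$$\|w_r\|_{L^\fz(B_r)}\le C r^2\|g(\cdot,u)\|_{L^\fz(4B)},$$
with $C=C(Q,C_Q(4R),C_P(4R))$. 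Then $h_r:=u-w_r$ is Cheeger-harmonic in $B_r$, so the elliptic Harnack inequality gives a universal oscillation-decay
$$\sup_{B(x_0,\theta r)}h_r-\inf_{B(x_0,\theta r)}h_r\le C_1\theta^{\gz_0}\lf(\sup_{B_r}h_r-\inf_{B_r}h_r\r)$$
for some $\theta\in(0,1/2)$ and $\gz_0\in(0,1)$ depending only on the doubling and Poincaré constants.

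Set $\omega(r):=\sup_{B_r}u-\inf_{B_r}u$. Combining the two estimates gives the Morrey-type recursion
$$\omega(\theta r)\le C_1\theta^{\gz_0}\omega(r)+C_2 r^2\|g(\cdot,u)\|_{L^\fz(4B)}.$$
Choosing $\theta$ small enough that $C_1\theta^{\gz_0}\le\theta^{\gz}$ for some $\gz\in(0,\gz_0)$ and iterating at the dyadic scales $r_k=\theta^k R$, the resulting error terms form a geometric series with exponent strictly less than $2$, so summing gives
$$\omega(\theta^k R)\le C\theta^{k\gz}\lf(\omega(4R)+R^2\|g(\cdot,u)\|_{L^\fz(4B)}\r)$$
for every $k\ge 0$. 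Since $\omega(4R)\le 2\|u\|_{L^\fz(4B)}$, a standard interpolation across $r\in(0,R)$ converts this into the Hölder estimate stated in the lemma.

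The main obstacle is obtaining the $L^\fz$ bound on $w_r$ with the correct $r^2$-scaling and with constants controlled by $Q,C_Q(4R),C_P(4R)$; this requires running Moser's iteration with cutoff functions adapted to the metric setting at every scale $r\le R$, which is carried out in \cite{bm,kms}. Once this bound and the known Hölder decay for Cheeger-harmonic functions are in hand, the rest is a routine oscillation-decay iteration.
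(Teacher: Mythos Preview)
Your argument is correct, but it takes a different route from the paper. The paper performs a \emph{single} decomposition on the fixed ball $4B$: it solves $Lv=g(\cdot,u)$ in $4B$ with $v\in H^{1,2}_0(4B)$, quotes from \cite{bm} both the $L^\fz$-bound $\|v\|_{L^\fz(4B)}\le CR^2\|g(\cdot,u)\|_{L^\fz(4B)}$ and a H\"older estimate for $v$ itself, and then applies the Harnack-based H\"older estimate to the harmonic function $u-v$ on $4B$; adding the two gives the lemma directly with $\gz=\min\{\gz_1,\gz_2\}$. By contrast, you run a scale-by-scale harmonic replacement and a Campanato-type oscillation iteration. Your approach is more self-contained, since it only uses the oscillation decay for \emph{harmonic} functions (a consequence of Harnack) together with the $L^\fz$-bound for the Dirichlet solution, and does not invoke a ready-made H\"older estimate for the Poisson problem. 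The paper's approach is shorter precisely because it outsources that piece of work to \cite{bm}.

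One cosmetic point: in your final display you write $\omega(4R)$, but the iteration starts at $r=R$ (and for $x_0\in 2B$ one has $B(x_0,R)\subset 4B$, whereas $B(x_0,4R)$ need not be contained in $4B$). Replace $\omega(4R)$ by $\omega(R)\le 2\|u\|_{L^\fz(4B)}$ and the argument is clean.
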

\begin{proof}
As $g(x,u)\in L^\fz(4B)$, from \cite{bm}, there exists $v\in H^{1,2}_0(4B)$
such that $Lv=g(x,u)\in 4B$. By \cite{bm}, we see that
$$\|v\|_{L^{\fz}(4B)}\le CR^2\|g(\cdot,u)\|_{L^\fz(4B)},$$
and there is $\gz_1\in (0,1)$, independent of $u,g,B$, such that for almost all $x,y\in 2B$,
$$|v(x)-v(y)|\le CR^2\|g(\cdot,u)\|_{L^\fz(4B)}\lf(\frac{d(x,y)}{R}\r)^{\gz_1}.$$

Moreover, since $u-v$ is harmonic in $4B$, we deduce from \cite[corollary 1.2]{bm} that
\begin{eqnarray*}
|(u-v)(x)-(u-v)(y)|&&\le C\lf(\fint_{4B}|u-v|\,d\mu\r)\lf(\frac{d(x,y)}{R}\r)^{\gz_2}\\
&&\le C\lf(\|u\|_{L^\fz(4B)}+R^2\|g(\cdot,u)\|_{L^\fz(4B)}\r)\lf(\frac{d(x,y)}{R}\r)^{\gz_2},
\end{eqnarray*}
for some $\gz_2\in (0,1)$. By letting $\gz=\min\{\gz_1,\gz_2\}$, we complete the proof.
\end{proof}

\begin{lem}\label{l3.2}
%
There exists $C=C(Q,C_Q(4R),C_P(4R))>0$ such that
for almost all $x_0\in B$, $x\in 2B$ and all $t\in (0,R^2)$, it holds
$$|u\psi(x)-T_t(u\psi)(x_0)|\le C\lf(\|u\|_{L^\fz(4B)}+R^2\|g(\cdot,u)\|_{L^\fz(4B)}\r)
R^{-\gz}(d(x,x_0)^\gz+t^{\gz/2}),$$ and
$$T_t(|u\psi(\cdot)-(u\psi)(x_0)|)(x_0)\le C\lf(\|u\|_{L^\fz(4B)}+R^2\|g(\cdot,u)\|_{L^\fz(4B)}\r)
R^{-\gz}t^{\gz/2},$$
\end{lem}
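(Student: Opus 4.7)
The plan is to write both quantities as heat integrals of $|u\psi(y)-u\psi(z)|$ (with reference point $z=x$ and $z=x_0$ respectively), split the integration into $y\in 2B$ and $y\in X\setminus 2B$, and estimate each piece via Lemma \ref{l3.1} on the bulk and via Gaussian tail bounds on the exterior. Throughout set $K:=\|u\|_{L^\fz(4B)}+R^2\|g(\cdot,u)\|_{L^\fz(4B)}$.

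For the first estimate, stochastic completeness \eqref{2.3} allows me to write
\[
T_t(u\psi)(x_0)-u\psi(x)=\int_X\lf[u\psi(y)-u\psi(x)\r]p(t,x_0,y)\,d\mu(y),
\]
so that $|T_t(u\psi)(x_0)-u\psi(x)|\le\int_X|u\psi(y)-u\psi(x)|p(t,x_0,y)\,d\mu(y)$. On $2B$, $\psi\equiv 1$, and Lemma \ref{l3.1} combined with the triangle inequality yields
\[
|u\psi(y)-u\psi(x)|\le CKR^{-\gz}\lf(d(x,x_0)^\gz+d(x_0,y)^\gz\r).
\]
Coupling this with the moment estimate $\int_X d(x_0,y)^\gz p(t,x_0,y)\,d\mu(y)\le Ct^{\gz/2}$, which follows from a dyadic annular decomposition applied to the Gaussian upper bound \eqref{2.1} using \eqref{1.2}, the $2B$-contribution is bounded by $CKR^{-\gz}(d(x,x_0)^\gz+t^{\gz/2})$. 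On $X\setminus 2B$, $|u\psi(y)-u\psi(x)|\le 2K$, and since $x_0\in B$ gives $d(x_0,y)\ge R$, the standard tail bound $\int_{d(x_0,y)\ge R}p(t,x_0,y)\,d\mu(y)\le Ce^{-cR^2/t}$, together with the elementary inequality $e^{-cR^2/t}\le C(t/R^2)^{\gz/2}$ valid for $0<t<R^2$, absorbs this term into $CKR^{-\gz}t^{\gz/2}$.

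The second estimate is a direct variant: use the same bulk/exterior splitting but with reference point $x_0$ in place of $x$. Since $\psi(x_0)=1$, Lemma \ref{l3.1} on $2B$ gives $|u\psi(y)-u\psi(x_0)|\le CKR^{-\gz}d(x_0,y)^\gz$, whose heat integral is $CKR^{-\gz}t^{\gz/2}$ by the moment bound. The exterior part is handled identically, and no $d(x,x_0)^\gz$ term appears.

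The main obstacle is routine but requires care: one must check that the moment and Gaussian tail bounds for $p(t,x_0,\cdot)$ on a locally $Q$-doubling metric space hold with constants depending only on $C_Q(4R)$ and $C_P(4R)$. Both follow from \eqref{2.1}, \eqref{1.2}, and a dyadic annular decomposition. The restriction $0<t<R^2$ is crucial: it makes the Gaussian tail super-polynomially small in $t/R^2$, so that it is easily dominated by the H\"older scaling $(t/R^2)^{\gz/2}$.
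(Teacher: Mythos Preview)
Your argument is correct and follows essentially the same route as the paper: split the heat integral into $2B$ and $X\setminus 2B$, apply the H\"older estimate of Lemma \ref{l3.1} on the bulk, and control the exterior by the Gaussian tail, then absorb $e^{-cR^2/t}\le C(t/R^2)^{\gz/2}$. The only tactical differences are that the paper first proves the second inequality and then deduces the first via $|u\psi(x)-T_t(u\psi)(x_0)|\le |u\psi(x)-u\psi(x_0)|+T_t(|u\psi(\cdot)-u\psi(x_0)|)(x_0)$, and it obtains the moment bound by comparing the Gaussian upper bound with the lower bound for $p(lt,x_0,\cdot)$ (with $l=2C_1/C_2$) and using stochastic completeness, rather than a dyadic annular decomposition.
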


\begin{proof}
Let $C(u,g)=\|u\|_{L^\fz(4B)}+R^2\|g(\cdot,u)\|_{L^\fz(4B)}$.
From the previous lemma, we see that for almost all $x_0,x\in 2B$,
$$|u(x)-u(x_0)|\le CC(u,g)
\lf(\frac{d(x,x_0)}{R}\r)^\gz.$$
Thus for almost all $x_0\in B$, $x\in 2B$ and all $t\in (0,R^2)$, by
using the doubling condition and the estimate \eqref{2.1} we obtain
\begin{eqnarray*}
T_t(|(u\psi)(x_0)-u\psi(\cdot)|)(x_0)&&\le \int_{2B} |u(x_0)\psi(x_0)-u(y)\psi(y)|p(t,x_0,y)\,d\mu(y)\nonumber\\
&&\hs+\int_{X\setminus 2B} |u(x_0)\psi(x_0)-u(y)\psi(y)|p(t,x_0,y)\,d\mu(y)\nonumber\\
&&\le CC(u,g)\int_{2B} \frac{d(y,x_0)^\gz}{R^\gz}
\frac{1}{{\mu(B(x_0,\sqrt t))^{1/2}\mu(B(y,\sqrt t))^{1/2}}}
e^{-\frac{d(y,x_0)^2}{C_1t}}\,d\mu(y)\nonumber\\
&&\hs+e^{-cR^2/t}\|u\|_{L^\fz(4B)}\int_{X\setminus
2B}\frac{1}{\mu(B(x_0,\sqrt t))^{1/2}\mu(B(y,\sqrt t))^{1/2}}
 e^{-\frac{d(y,x_0)^2}{2C_1t}}\,d\mu(y)\nonumber \\
&& \le
C\lf[ C(u,g)R^{-\gz}t^{\gz/2}+\|u\|_{L^\fz(4B)}e^{{-cR^2/t}}\r]\int_X p(lt,x_0,y)\,d\mu(y)\nonumber\\
&&\le
CC(u,g)\lf[R^{-\gz}t^{\gz/2}\r],
\end{eqnarray*}
where $l=\frac{2C_1}{C_2}$, which proves the second inequality and implies that
\begin{eqnarray*}
|u(x)\psi(x)-T_t(u\psi)(x_0)|&&\le |u(x)\psi(x)-u(x_0)\psi(x_0)|+|T_t(u(x_0)\psi(x_0))-T_t(u\psi)(x_0)|\nonumber\\
&&\le CC(u,g)\lf[R^{-\gz}(d(x,x_0)^\gz+t^{\gz/2})\r].
\end{eqnarray*}
The proof is then completed.
\end{proof}

\begin{prop}\label{p3.1}
There exists $C=C(Q,C_P(8R),C_Q(8R))>0$ such that for almost
every $x_0\in B$
\begin{equation*}
J(x_0,R^2)\le C\lf(\frac 1{R^2}\|u\|_{L^\fz(8B)}^2+R^2\|g(\cdot,u)\|_{L^\fz(8B)}^2\r).
\end{equation*}
\end{prop}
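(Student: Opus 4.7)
The plan is to estimate $J(x_0,R^2)$ by splitting
$|D(u\psi)|^2\le 2\psi^2|Du|^2+2u^2|D\psi|^2$
and treating the two pieces separately. The $u^2|D\psi|^2$ contribution is immediate: since $|D\psi|\le C/R$, $\supp D\psi\subset 4B$, and $\int_X p(s,x_0,\cdot)\,d\mu=1$ by stochastic completeness, one has
$$\frac{2}{R^2}\int_0^{R^2}\int_X u^2|D\psi|^2 p\,d\mu\,ds\le \frac{C}{R^2}\|u\|_{L^\fz(4B)}^2,$$
which is already of the desired form. The work therefore goes into bounding
$I:=\frac{1}{R^2}\int_0^{R^2}\int_X\psi^2|Du|^2 p(s,x_0,x)\,d\mu(x)\,ds.$

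For each fixed $s>0$, I would test the weak formulation of $Lu=g(\cdot,u)$ against $\phi_s=u\psi^2 p(s,x_0,\cdot)$, which is a legitimate test function by a density argument since $\phi_s\in H^{1,2}_0(4B)$ and $4B\subset\subset\Omega$. Expanding $D\phi_s$ via the Leibniz rule and inserting into the equation yields
\begin{align*}
\int_X\psi^2 p|Du|^2\,d\mu=&-\int_X g u\psi^2 p\,d\mu-2\int_X u\psi p\,Du\cdot D\psi\,d\mu\\
&-\int_X u\psi^2 Du\cdot Dp\,d\mu.
\end{align*}
The first term on the right is bounded pointwise in $s$ by $\|g\|_{L^\fz(4B)}\|u\|_{L^\fz(4B)}$ using $\int_X p\,d\mu=1$. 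For the second, Cauchy--Schwarz and Young's inequality together with $|D\psi|\le C/R$ give $\tfrac14\int\psi^2 p|Du|^2\,d\mu+CR^{-2}\|u\|_{L^\fz(4B)}^2$, and I absorb the first part on the left.

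The delicate third term I would attack through the algebraic identity $u\psi^2 Du=\tfrac12 D((u\psi)^2)-u^2\psi D\psi$, which splits it into $\tfrac12\mathscr{E}((u\psi)^2,p(s,x_0,\cdot))-\int_X u^2\psi D\psi\cdot Dp\,d\mu$. Using $\mathscr{E}(f,p(s,x_0,\cdot))=-\pa_s T_s f(x_0)$, the first piece integrates in $s\in(0,R^2)$ to exactly $\tfrac12[u(x_0)^2-T_{R^2}((u\psi)^2)(x_0)]$, whose absolute value is at most $\|u\|_{L^\fz(4B)}^2$. For the second piece, $\psi D\psi$ is supported in $4B\setminus 2B\subset B(x_0,5R)\setminus B(x_0,R)$; a spatial Cauchy--Schwarz combined with Lemma \ref{l2.2}, applied to a dyadic decomposition of this off-center annulus, yields $\int_0^{R^2}\int_{4B\setminus 2B}|D_y p|^2\,d\mu\,ds\le C\mu(B(x_0,R))^{-1}$, and pairing this with the factor $R^{-1}\|u\|_{L^\fz(4B)}^2$ and $(R^2\mu(4B))^{1/2}$ via local doubling produces a contribution of order $\|u\|_{L^\fz(4B)}^2$. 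Integrating everything in $s$, dividing by $R^2$, and applying Young's inequality $\|g\|_\fz\|u\|_\fz\le\tfrac12 R^{-2}\|u\|_\fz^2+\tfrac12 R^2\|g\|_\fz^2$ delivers the stated bound (with $L^\fz$ norms taken on $4B\subset 8B$). The main obstacle is the error term $\int u^2\psi D\psi\cdot Dp\,d\mu$: because $\psi$ is centered at $y_0$ rather than $x_0$, Lemma \ref{l2.2} does not apply directly and must be iterated over a finite off-center decomposition, after which the resulting factors $\mu(B(x_0,\cdot))^{-1}$ have to be reconciled with $\mu(4B)$ through the doubling condition; a secondary but routine issue is the density argument legitimizing $u\psi^2 p(s,x_0,\cdot)$ as a test function.
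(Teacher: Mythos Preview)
Your proposal is correct and takes a genuinely different route from the paper. The paper does not split $|D(u\psi)|^2$ via Leibniz and test the equation with $u\psi^2 p(s,x_0,\cdot)$; instead it writes, in the weak sense of measures,
\[
|D(u\psi)|^2=\tfrac12 A\big((u\psi)^2\big)-A(u\psi)\,T_s(u\psi)(x_0)-\big[u\psi-T_s(u\psi)(x_0)\big]\big(\psi Au+uA\psi+2Du\cdot D\psi\big),
\]
and after integrating against $p(s,x_0,\cdot)$ over $s\in(\ez,t)$ obtains the variance expression $T_t((u\psi)^2)-[T_t(u\psi)]^2$ as the main term. The error terms involving $u\psi-T_s(u\psi)(x_0)$ are then controlled via the H\"older continuity of $u$ (Lemmas~\ref{l3.1}, \ref{l3.2}) and a separate annulus estimate (Lemma~\ref{l3.4}). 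Your direct Caccioppoli-against-the-heat-kernel argument avoids the H\"older estimate entirely and is more elementary; your handling of the $\int u^2\psi D\psi\cdot Dp$ term via a finite dyadic use of Lemma~\ref{l2.2} is exactly the sort of thing done inside the paper's Lemma~\ref{l3.4}.

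What your approach gives up is the refined pointwise-in-$t$ inequality the paper actually derives along the way,
\[
J(x_0,t)\le \frac{1}{2t}\Big\{T_t\big((u\psi)^2\big)(x_0)-[T_t(u\psi)(x_0)]^2\Big\}+CR^{-\gz}t^{\gz/2}\Big[\tfrac1{R^2}\|u\|_{L^\fz(8B)}^2+R^2\|g(\cdot,u)\|_{L^\fz(8B)}^2\Big],
\]
valid for all $t\in(0,R^2)$ (this is \eqref{3.4}). This, not merely the endpoint $t=R^2$, is what feeds into the proof of Theorem~\ref{t3.1}: the lower bound on $J'(x_0,t)$ there comes precisely from comparing $T_t(|D(u\psi)|^2)$ with $\frac{1}{2t}\{T_t((u\psi)^2)-[T_t(u\psi)]^2\}$ and invoking the curvature condition~\eqref{1.4}. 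So while your argument proves Proposition~\ref{p3.1} as stated, the paper's more elaborate route is designed to deliver \eqref{3.4}, which is the real engine for the subsequent Lipschitz estimate. One minor point: to keep the telescoping of $\partial_s T_s((u\psi)^2)(x_0)$ intact, you should absorb the $\tfrac14\int\psi^2 p|Du|^2$ term while keeping the remaining right-hand side signed (not in absolute value) before integrating in $s$; this is routine but worth saying explicitly.
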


\begin{proof} For each $0<\ez<t\le R^2$, set
\begin{eqnarray}\label{3.2x}
J_\ez(x_0,t):=&&\frac{1}{t}\int_{\ez}^t\int_X |D(u\psi)(x)|^2
p(s,x_0,x)\,d\mu(x)\,ds.
\end{eqnarray}
Notice that
$$|D(u\psi)|^2=\frac 12 A (u\psi)^2-A(u\psi)T_s(u\psi)(x_0)-[u\psi-T_s(u\psi)(x_0)](\psi Au+uA\psi+2Du\cdot D\psi)$$
in the weak sense of measures. 
Thus, for each $0<\ez<t$ we have
\begin{eqnarray}\label{3.2}
tJ_\ez(x_0,t)&&=\frac 12 \int_\ez^t\int_X
[A((u\psi)^2)(x)-2A(u\psi)(x)T_s(u\psi)(x_0)]
p(s,x_0,x)\,d\mu(x)\,ds\nonumber\\
&&\hs -\int_\ez^t\int_X [(u\psi)(x)-T_s(u\psi)(x_0)][\psi
Au+uA\psi+2Du\cdot D\psi]p(s,x_0,x)\,d\mu(x)\,ds.
\end{eqnarray}
As the heat kernel is a solution to the heat equation $Au=\frac{\partial}{\partial t}u$ on $(\ez,t)\times X$ (see
Sturm \cite[proposition 2.3]{st2}), we further deduce that
\begin{eqnarray}\label{3.3x}
&&\int_\ez^t\int_X  [A((u\psi)^2)(x)-2A(u\psi)(x)T_s(u\psi)(x_0)]p(s,x_0,x)\,d\mu(x)\,ds\nonumber\\
&&\hs=[T_t((u\psi)^2)(x_0)-T_\ez((u\psi)^2)(x_0)]-\int_\ez^t 2T_s(u\psi)(x_0)AT_s(u\psi)(x_0)\,ds\nonumber\\
&&\hs=[T_t((u\psi)^2)(x_0)-T_\ez((u\psi)^2)(x_0)]-\int_\ez^t \frac{\partial}{\partial s}[ T_s(u\psi)(x_0)]^2\,ds\nonumber\\
&&\hs=\lf(T_t((u\psi)^2)(x_0)-[T_t(u\psi)(x_0)]^2\r)-\lf(T_\ez((u\psi)^2)(x_0)-[T_\ez(u\psi)(x_0)]^2\r).
 \end{eqnarray}

As the functions $(u\psi)(x)-T_s(u\psi)(x_0)$, $p(s,x_0,x)$ and
$\psi$ are bounded functions with gradient in $L^2(X)$, and
$\supp\lf\{ [(u\psi)(x)-T_s(u\psi)(x_0)]\psi p(s,x_0,x)\r\}\subset
4B$, we deduce from Lemma \ref{l3.2} that
\begin{eqnarray*}
&&\lf|\int_\ez^t\int_X [(u\psi)(x)-T_s(u\psi)(x_0)]\psi Au
p(s,x_0,x)\,d\mu(x)\,ds\r|\\
&&=\lf|\int_\ez^t\int_X [(u\psi)(x)-T_s(u\psi)(x_0)]\psi g(x,u(x))
p(s,x_0,x)\,d\mu(x)\,ds\r|\\
&&\le \lf|\int_\ez^t\int_X
[(u\psi)(x)-(u\psi)(x_0)-T_s(u\psi-(u\psi)(x_0))(x_0)]\psi g(x,u(x))
p(s,x_0,x)\,d\mu(x)\,ds\r|\\
 &&\le \|\psi g(\cdot,u)\|_{L^\fz(4B)}\int_\ez^t T_s(|u\psi-(u\psi)(x_0)|)(x_0)\,ds\\
&&\le C\|
g(\cdot,u)\|_{L^\fz(4B)}\lf[\|u\|_{L^\fz(4B)}+R^2\|g(\cdot,u)\|_{L^\fz(4B)}\r]\int_0^{t}R^{-\gz}s^{\gz/2}\,ds\\
&&\le C\|
g(\cdot,u)\|_{L^\fz(4B)}\lf[\|u\|_{L^\fz(4B)}+R^2\|g(\cdot,u)\|_{L^\fz(4B)}\r]R^{-\gz}t^{1+\gz/2}.
\end{eqnarray*}

The estimates for second  and third terms in \eqref{3.2} follow from
the following Lemma \ref{l3.4},
\begin{eqnarray}\label{3.3}
&&\lf|\int_\ez^t\int_X [(u\psi)(x)-T_s(u\psi)(x_0)]p(s,x_0,x)[u(x) A\psi(x)+2Du(x)\cdot
D\psi(x)]\,d\mu(x)\,ds\r|\nonumber\\
&& \hs\le C
e^{-cR^2/t}\|u\|_{L^\fz(8B)}\lf(\|u\|_{L^\fz(8B)}+R^2\|g(\cdot,u)\|_{L^\fz(8B)}\r).
\end{eqnarray}

As the underlying space is stochastically complete, the H\"older inequality implies for each $t>0$,
 $$T_t((u\psi)^2)(x_0)-[T_t(u\psi)(x_0)]^2\ge 0.$$
Combining the above estimates, by \eqref{3.2}, we obtain that
\begin{eqnarray*}
J_\ez(x_0,t)&&\le \frac 1{2t} \lf|\int_\ez^t\int_X
[A((u\psi)^2)(x)-2A(u\psi)(x)T_s(u\psi)(x_0)]
p(s,x_0,x)\,d\mu(x)\,ds\r|\nonumber\\
&&\hs\hs+\frac 1t\lf|\int_\ez^t\int_X [(u\psi)(x)-T_s(u\psi)(x_0)]
[\psi Au+uA\psi+2Du\cdot D\psi]p(s,x_0,x)\,d\mu(x)\,ds\r|\nonumber\\
&&\le \frac 1{2t}\lf\{T_t((u\psi)^2)(x_0)-[T_t(u\psi)(x_0)]^2\r\}+\frac 1{2t}\lf\{T_\ez((u\psi)^2)(x_0)-[T_\ez(u\psi)(x_0)]^2\r\}\nonumber\\
&&\hs+C\frac
{e^{-cR^2/t}}t\lf(\|u\|_{L^\fz(8B)}^2+R^2\|u\|_{L^\fz(8B)}\|g(\cdot,u)\|_{L^\fz(8B)}\r)\nonumber\\
&&\hs + C\|
g(\cdot,u)\|_{L^\fz(4B)}\lf(\|u\|_{L^\fz(4B)}+R^2\|g(\cdot,u)\|_{L^\fz(4B)}\r)R^{-\gz}t^{\gz/2}\nonumber\\
&&\le \frac 1{2t}\lf\{T_t((u\psi)^2)(x_0)-[T_t(u\psi)(x_0)]^2\r\}+\frac 1{2t}\lf\{T_\ez((u\psi)^2)(x_0)-[T_\ez(u\psi)(x_0)]^2\r\}\nonumber\\
&&\hs+ CR^{-\gz}t^{\gz/2}\lf[\frac 1{R^2}\|u\|_{L^\fz(8B)}^2+R^2\|g(\cdot,u)\|_{L^\fz(8B)}^2\r].
 \end{eqnarray*}
Finally, observe that for almost every $x_0$, it holds
$$\lim_{\ez\to 0}\lf\{T_\ez((u\psi)^2)(x_0)-[T_\ez(u\psi)(x_0)]^2\r\}=(u\psi)(x_0)^2-(u\psi)(x_0)^2=0;$$
see also the following inequality \eqref{3.7x}.
Hence, the monotone convergence theorem gives us
\begin{eqnarray}\label{3.4}
J(x_0,t)=\lim_{\ez\to 0}J_\ez(x_0,t)&&\le \frac 1{2t}\lf\{T_t((u\psi)^2)(x_0)-[T_t(u\psi)(x_0)]^2\r\}\nonumber\\
&&\hs+ CR^{-\gz}t^{\gz/2}\lf[\frac 1{R^2}\|u\|_{L^\fz(8B)}^2+R^2\|g(\cdot,u)\|_{L^\fz(8B)}^2\r].
\end{eqnarray}
Letting $t=R^2$ completes the proof of Proposition \ref{p3.1}.
\end{proof}

\begin{rem}\rm In \cite{krs}, it was proved that
\begin{eqnarray*}
&&\int_0^t\int_X  [A((u\psi)^2)(x)-2A(u\psi)(x)T_s(u\psi)(x_0)]p(s,x_0,x)\,d\mu(x)\,ds=
T_t((u\psi)^2)(x_0)-[T_t(u\psi)(x_0)]^2,
 \end{eqnarray*}
which was also used in \cite{ji2}. The proof of the equality
needs a careful argument to deal with the singularity of $-\frac{\partial}{\partial t}p+A_x p(\cdot,x_0,\cdot)$
at $(0,x_0,x_0)$; see \cite[proposition 3.4]{krs}. As pointed out by Kell \cite{ke13},
an upper bound for measure of balls with
small radius is needed in the proof, i.e.,
$\mu(B(x,r))\le Cr$
for $r<1$, see \cite[p.160]{krs}. We do not know if this is true in our settings.

Our proof of Proposition 3.1 above avoids using this equality, and is more direct and easier.
\end{rem}

To estimate the remaining term \eqref{3.3} in Proposition \ref{p3.1} we recall the Caccioppoli
inequality, which follows by inserting a suitable test function into the equation.

\begin{lem}[Caccioppoli inequality]\label{l3.3}
 If $L u=g(\cdot, u)$ in $B(y_0,R)$, then there exists $C>0$ independent of $Q,C_Q,C_P$
 such that for every $r<R$ it holds
\begin{equation*}
\int_{B(y_0,r)}|Du|^2\,d\mu\le
\frac{C}{(R-r)^2}\int_{B(y_0,R)}|u|^2\,d\mu+\int_{B(y_0,R)}|g(x,u(x))||u(x)|\,d\mu(x).
\end{equation*}
\end{lem}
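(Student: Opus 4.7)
The plan is to prove the Caccioppoli inequality by the classical device of testing the weak equation against $\phi = \eta^2 u$ for a suitable cut-off $\eta$, and then absorbing the $|Du|^2$ term on the right.

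First I would fix a Lipschitz cut-off $\eta$ with $\eta \equiv 1$ on $B(y_0,r)$, $\supp \eta \subset B(y_0,R)$, $0\le \eta\le 1$, and $|D\eta|\le C/(R-r)$. Since $u\in H^{1,2}(B(y_0,R))$ and $\eta$ is a bounded Lipschitz function with compact support in $B(y_0,R)$, the product $\phi = \eta^2 u$ lies in $H^{1,2}_0(B(y_0,R))$; a standard approximation by Lipschitz functions with compact support shows it is admissible as a test function in the weak formulation of $Lu = g(\cdot,u)$. By the Leibniz rule for the Cheeger derivative,
\begin{equation*}
D\phi = \eta^2\, Du + 2\eta u\, D\eta.
\end{equation*}

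Plugging $\phi$ into the definition of a weak solution gives
\begin{equation*}
-\int_{B(y_0,R)} \eta^2\, Du\cdot Du\,d\mu \;-\; 2\int_{B(y_0,R)} \eta u\, Du\cdot D\eta\,d\mu \;=\; \int_{B(y_0,R)} g(x,u)\,\eta^2 u\,d\mu,
\end{equation*}
which, after rearrangement and using that $Du\cdot Du$ is comparable to $|Du|^2$, yields
\begin{equation*}
\int_{B(y_0,R)} \eta^2 |Du|^2\,d\mu \;\le\; 2\left|\int_{B(y_0,R)} \eta u\, Du\cdot D\eta\,d\mu\right| \;+\; \int_{B(y_0,R)} |g(x,u)|\,|u|\,\eta^2\,d\mu.
\end{equation*}

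For the cross term I would apply the Cauchy--Schwarz (or Young's) inequality, $|2\eta u\, Du\cdot D\eta|\le \tfrac{1}{2}\eta^2|Du|^2 + 2u^2|D\eta|^2$, and absorb the first piece into the left-hand side. Using $\eta\equiv 1$ on $B(y_0,r)$ and $|D\eta|\le C/(R-r)$, together with $\eta\le 1$ in the $g$-term, produces
\begin{equation*}
\int_{B(y_0,r)} |Du|^2\,d\mu \;\le\; \frac{C}{(R-r)^2}\int_{B(y_0,R)} |u|^2\,d\mu \;+\; \int_{B(y_0,R)} |g(x,u(x))|\,|u(x)|\,d\mu(x),
\end{equation*}
which is exactly the claim. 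The only delicate point is justifying that $\eta^2 u$ is an admissible test function and that one may replace $Du\cdot Du$ by $|Du|^2$ (up to absorbed constants), both of which rely on properties of the Cheeger derivative recalled in Section~2; the remainder is a purely algebraic absorption argument with no geometric input and in particular no dependence on $Q$, $C_Q$, or $C_P$.
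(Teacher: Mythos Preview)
Your proposal is correct and is exactly the approach the paper intends: the paper itself merely says the inequality ``follows by inserting a suitable test function into the equation,'' and your choice $\phi=\eta^2 u$ together with the Young/absorption argument is precisely that standard computation. One small remark: in this paper's notation $|Du|^2$ \emph{is} $Du\cdot Du$ (the comparability is with $\Lip u$, not between these two), so no extra constant enters there; your argument does produce a harmless numerical constant in front of the $g$-term, which is immaterial for all later uses of the lemma.
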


\begin{lem}\label{l3.4}
There exists $C=C(Q,C_Q(8R),C_P(8R))>0$ such that for almost all
$x_0\in B=B(y_0,R)$, $x\in 2B$ and all $0<\ez<t\le R^2$, it holds
\begin{eqnarray*}
&&\lf|\int_\ez^t\int_X
[(u\psi)(x)-T_s(u\psi)(x_0)]p(s,x_0,x)[u(x) A\psi(x)+2Du(x)\cdot
D\psi(x)]\,d\mu(x)\,ds\r|\\
&&\hs\le Ce^{\frac{-R^2}{ct}}\|u\|_{L^\fz(8B)}
\lf(\|u\|_{L^\fz(8B)}+R^2\|g(\cdot,u)\|_{L^\fz(8B)}\r).
\end{eqnarray*}
\end{lem}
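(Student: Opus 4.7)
The essential observation is that since $\psi\equiv 1$ on $2B$ and $\supp\psi\subset 4B$, both $D\psi$ and (distributionally) $A\psi$ are supported in the annulus $\ca:=B(y_0,4R)\setminus B(y_0,2R)$. For $x_0\in B$ and $x\in\ca$ we have $d(x,x_0)\ge R$, so the Gaussian bound \eqref{2.1} together with doubling gives
$$p(s,x_0,x)\le C\mu(B(x_0,R))^{-1}e^{-R^2/(cs)}\qquad\text{on }\ca,\ s\in(0,R^2],$$
after absorbing the polynomial factor $(R/\sqrt{s})^Q$ into the exponential by enlarging $c$. All three terms that will arise inherit this decay, which for $s\in[\ez,t]\subset(0,R^2]$ is at most $e^{-R^2/(ct)}$, matching the target bound.

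Since $A\psi$ is only a distribution, I interpret $\int\Delta\,p\,u\,A\psi\,d\mu$ via the Dirichlet form as $-\int D(\Delta p u)\cdot D\psi\,d\mu$, where $\Delta(s,x):=(u\psi)(x)-T_s(u\psi)(x_0)$ so that $D\Delta=D(u\psi)$. Expanding $D(\Delta p u)$ by Leibniz, combining with the $2Du\cdot D\psi$ term, and noting that $\Delta-u\psi=-T_s(u\psi)(x_0)$ is constant in $x$, the left-hand side of the lemma equals
\begin{align*}
&-\int_\ez^t\!\int_\ca u^2 p|D\psi|^2\,d\mu\,ds-\int_\ez^t T_s(u\psi)(x_0)\!\int_\ca p\,Du\cdot D\psi\,d\mu\,ds\\
&\qquad-\int_\ez^t\!\int_\ca u\,\Delta\,Dp\cdot D\psi\,d\mu\,ds.
\end{align*}

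The first term is bounded using only $|u|\le\|u\|_{L^\infty(4B)}$, $|D\psi|\le C/R$, and $\int_\ca p(s,x_0,\cdot)\,d\mu\le Ce^{-R^2/(cs)}$; integration in $s$ absorbs the $R^{-2}$. The second uses $|T_s(u\psi)(x_0)|\le\|u\|_{L^\infty(4B)}$ together with Cauchy--Schwarz weighted by $p$,
$$\int_\ca p|Du||D\psi|\,d\mu\le\frac{C}{R}\lf(\int_\ca p\,d\mu\r)^{1/2}\lf(\int_\ca p|Du|^2\,d\mu\r)^{1/2},$$
where $\int_\ca|Du|^2\,d\mu$ is controlled via the Caccioppoli inequality (Lemma \ref{l3.3}) on $B(y_0,8R)$, producing the factor $\|u\|_{L^\infty(8B)}\bigl(\|u\|_{L^\infty(8B)}+R^2\|g(\cdot,u)\|_{L^\infty(8B)}\bigr)$ after dividing by $R^2$. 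For the third term, $|u\Delta|\le 2\|u\|_{L^\infty(4B)}^2$ on $\ca$; Cauchy--Schwarz in $d\mu\,ds$ combined with Lemma \ref{l2.2} applied to the three dyadic annuli $B(x_0,2^{j+1}R)\setminus B(x_0,2^j R)$ ($j=0,1,2$) which cover $\ca$ (since $x_0\in B$) yields $\int_0^t\!\int_\ca|Dp|\,d\mu\,ds\le Ct^{1/2}e^{-R^2/(ct)}$, and coupling this with the $1/R$ from $|D\psi|$ and $t\le R^2$ gives the claimed decay.

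The principal obstacle is the integration-by-parts identity itself: one must check that $\Delta p u$ is a legitimate test function paired against $A\psi$ for each $s>0$, and that the Leibniz expansion together with the $2Du\cdot D\psi$ term telescope to exactly the above three-term decomposition (in particular, the unwanted $u\psi\,p\,Du\cdot D\psi$ integral must cancel against a portion of $\Delta p\,Du\cdot D\psi$). Once this identity is in place, Gaussian decay of $p$ on $\ca$, the Caccioppoli bound on $Du$, and Lemma \ref{l2.2}'s bound on $Dp$ combine routinely to finish the estimate.
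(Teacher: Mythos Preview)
Your proposal is correct and follows essentially the same route as the paper: both interpret the $A\psi$ term via integration by parts against $D\psi$, use the Leibniz rule together with $D\Delta=D(u\psi)$ and the identity $\Delta-u\psi=-T_s(u\psi)(x_0)$ to reduce to exactly the same three integrals (the paper labels them $\mathrm{H}_1,\mathrm{H}_2,\mathrm{H}_3$), and then bound them respectively by Caccioppoli on $Du$, the pointwise Gaussian decay of $p$ on the annulus, and Lemma~\ref{l2.2} on $Dp$. Your caveat about justifying the pairing of $\Delta\,p\,u$ against $A\psi$ is well placed; the paper simply asserts the identity ``in the weak sense of measures,'' relying implicitly on the $H^{1,2}$ regularity of $p(s,x_0,\cdot)$ for $s>0$.
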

\begin{proof} Let $w(x,s)=u(x)\psi(x)-T_s(u\psi)(x_0)$ and notice that
\begin{eqnarray*}
&&w(x,s)p(s,x_0,x)[u(x) A\psi(x)+2Du(x)\cdot
D\psi(x)]\\
&&\hs=\lf[w(x,s)p(s,x_0,x)Du(x)-D(u\psi)(x) u(x)p(s,x_0,x)
-w(x,t)u(x)Dp(s,x_0,x)\r]\cdot D\psi(x)\\
&&\hs=-T_s(u\psi)(x_0)p(s,x_0,x)Du(x)\cdot D\psi(x)-|D\psi(x)|^2 u(x)^2p(s,x_0,x)
\\
&&\hs\hs-w(x,t)u(x)Dp(s,x_0,x)\cdot D\psi(x)\\
&&\hs=:\mathrm{H}_1+\mathrm{H}_2+\mathrm{H}_3
\end{eqnarray*}
in the weak sense of measures.
By using the Caccioppoli
inequality (Lemma \ref{l3.3}) and the H\"older inequality, we obtain
\begin{eqnarray*}
&&\lf|\int_\ez^t\int_X \mathrm{H}_1 \,d\mu(x)\,ds\r|\\
&&\le  \int_\ez^t\int_{B(y_0,{4R})\setminus
B(y_0,{2R})} |T_s(u\psi)(x_0)||Du(x)||D\psi(x)|p(s,x_0,x)\,d\mu(x)\,ds
\\&& \le \frac CR \|u\|_{L^\fz(4B)}\int_0^t\int_{B(x_0,{5R})\setminus B(x_0,{R})}
|Du(x)|\frac {R^Q}{s^{Q/2}\mu(B(x_0,R))}e^{\frac{-R^2}{cs}}\,d\mu(x)\,ds\\
&&\le \frac {Ct}R  \frac
{1}{\mu(B(x_0,R))}e^{\frac{-R^2}{ct}}\|u\|_{L^\fz(4B)}
\int_{B(x_0,{5R})\setminus B(x_0,{R})}
|Du(x)|\,d\mu(x)\\
&&\le \frac {Ct}R  \frac
{1}{\mu(B(x_0,R))^{1/2}}e^{\frac{-R^2}{ct}}\|u\|_{L^\fz(4B)}
\mu(B(x_0,4R))^{1/2}\lf\{\||u|\|^{1/2}_{L^\fz(8B)}\|g(\cdot,u)\|^{1/2}_{L^\fz(8B)}
+\frac {\|u\|_{L^\fz(8B)}}{R}\r\}\\
&&\le Ce^{\frac{-R^2}{ct}}
\lf(\|u\|_{L^\fz(8B)}^2+R^2\|u\|_{L^\fz(8B)}\|g(\cdot,u)\|_{L^\fz(8B)}\r).
\end{eqnarray*}

By \eqref{2.2}, we have
\begin{eqnarray*}
\lf|\int_\ez^t\int_X \mathrm{H}_2\,d\mu(x)\,ds\r|
&&\le \frac C{R^2} \|u\|^2_{L^\fz(4B)} \int_0^t\int_{B(y_0,4R)\setminus B(y_0,{2R})} \frac {1}{\mu(B(x_0,\sqrt s))}e^{-\frac{d(x,x_0)^2}{cs}}\,d\mu(x)\,ds
\\&& \le \frac C{R^2} \|u\|^2_{L^\fz(4B)} \int_0^t\int_{B(x_0,5R)\setminus B(x_0,{R})}
\frac {R^Q}{s^{Q/2}\mu(B(x_0,R))}e^{\frac{-R^2}{cs}}\,d\mu(x)\,ds\\
&&\le Ce^{\frac{-R^2}{ct}}\|u\|_{L^\fz(8B)}^2.
\end{eqnarray*}

We use Lemma \ref{l2.2} and the H\"older inequality  to estimate the
last term,
\begin{eqnarray*}
&&\lf|\int_\ez^t\int_X \mathrm{H}_3 \,d\mu(x)\,ds\r|\\
&&\le  \frac CR \int_0^t\int_{B(y_0,{4R})\setminus B(y_0,{2R})}
\lf|[u(x)\psi(x)-T_s(u\psi)(x_0)]u(x)\r| |Dp(s,x_0,x)|\,d\mu(x)\,ds\\
&&\le  \frac CR \|u\|_{L^\fz{(4B)}}^2
\int_0^t\int_{B(x_0,{5R})\setminus B(x_0,{R})}
|Dp(s,x_0,x)|\,d\mu(x)\,ds\\
&&\le   \frac CR \|u\|_{L^\fz{(4B)}}^2 C\sqrt t \mu(B(x_0,R))^{1/2}
\mu(B(x_0,R))^{-1/2}\exp\lf\{-\frac{R^2}{ct}\r\}\\
&&\le Ce^{\frac{-R^2}{ct}}\|u\|_{L^\fz{(8B)}}^2,
\end{eqnarray*}
which completes the proof.
\end{proof}

\begin{lem}\label{l3.5}
There exists $C=C(Q,C_P(4R),C_Q(4R))>0$ such that almost
all $x_0\in B=B(y_0,R)$ it holds
\begin{eqnarray*}
&&\int_0^{R^2} \lf\{T_t((u\psi)^2)(x_0)-[T_t(u\psi)(x_0)]^2\r\}\frac
{\,dt}t\le C\lf[\|u\|_{L^\fz(4B)}+R^2\|g(\cdot,u)\|_{L^\fz(4B)}\r]^2.
\end{eqnarray*}
\end{lem}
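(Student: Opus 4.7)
The plan is to exploit stochastic completeness to rewrite the quantity $T_t((u\psi)^2)(x_0)-[T_t(u\psi)(x_0)]^2$ as an honest squared deviation integrated against the probability measure $p(t,x_0,\cdot)\,d\mu$, and then separate the near and far regions relative to $x_0$. Using $T_t 1\equiv 1$ from \eqref{2.3}, expansion of the square yields the identity
$$T_t((u\psi)^2)(x_0)-[T_t(u\psi)(x_0)]^2=\int_X \bigl(u\psi(y)-T_t(u\psi)(x_0)\bigr)^2\,p(t,x_0,y)\,d\mu(y).$$
Write $C(u,g):=\|u\|_{L^\fz(4B)}+R^2\|g(\cdot,u)\|_{L^\fz(4B)}$ for brevity.

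On the near region $y\in 2B$, Lemma \ref{l3.2} supplies the pointwise bound
$$\bigl|u\psi(y)-T_t(u\psi)(x_0)\bigr|^2\le CC(u,g)^2 R^{-2\gz}\bigl(d(y,x_0)^{2\gz}+t^{\gz}\bigr).$$
Combining this with the moment estimate $\int_X d(y,x_0)^{2\gz}p(t,x_0,y)\,d\mu(y)\le Ct^{\gz}$ (valid for $t\le R^2$, obtained via a dyadic annular decomposition from the Gaussian upper bound \eqref{2.1} and the local $Q$-doubling \eqref{1.2}), the $2B$-contribution is at most $CC(u,g)^2 R^{-2\gz}t^{\gz}$. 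On the far region $y\in X\setminus 2B$, the integrand is bounded by $4\|u\|_{L^\fz(4B)}^2$ since $\supp(u\psi)\subset 4B$ and $T_t$ is $L^\fz$-contractive; pairing this with the standard Gaussian tail estimate $\int_{X\setminus 2B}p(t,x_0,y)\,d\mu(y)\le Ce^{-R^2/(ct)}$, the far contribution is at most $C\|u\|_{L^\fz(4B)}^2 e^{-R^2/(ct)}$.

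It remains to integrate against $dt/t$ over $(0,R^2)$. The near term contributes $CC(u,g)^2 R^{-2\gz}\int_0^{R^2} t^{\gz-1}\,dt=\tfrac{C}{\gz}C(u,g)^2$. For the far term, the substitution $r=R^2/(ct)$ produces $\int_0^{R^2} t^{-1}e^{-R^2/(ct)}\,dt=\int_{1/c}^{\fz} r^{-1}e^{-r}\,dr$, a finite constant independent of $R$, so this piece is controlled by $C\|u\|_{L^\fz(4B)}^2\le CC(u,g)^2$. Summing the two contributions delivers the claimed bound.

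The main (though modest) obstacle is verifying the two heat kernel estimates cleanly in the locally doubling setting: the polynomial moment bound and the Gaussian tail bound. Both follow from the two-sided Gaussian estimates \eqref{2.1} combined with the $Q$-doubling inequality \eqref{1.2} via the usual dyadic annular partition; the constraint $t\le R^2$ keeps us within the range of validity of the local heat kernel estimates, and stochastic completeness \eqref{2.3} is what legitimizes the variance identity at the outset.
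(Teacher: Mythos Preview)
Your proposal is correct and follows essentially the same route as the paper: the variance identity via stochastic completeness, the near/far split at $2B$, Lemma~\ref{l3.2} on the near part, an $L^\infty$ bound plus Gaussian tail on the far part, and then integration against $dt/t$. The only cosmetic difference is in how the Gaussian moment and tail bounds are obtained: you invoke a dyadic annular decomposition, whereas the paper splits the exponential in \eqref{2.1} as $e^{-d^2/(C_1t)}=e^{-d^2/(2C_1t)}\cdot e^{-d^2/(2C_1t)}$, uses one factor to absorb the polynomial (or extract the tail decay) and the other, together with the lower bound in \eqref{2.1}, to reconstruct $p(lt,x_0,\cdot)$ with $l=2C_1/C_2$, after which stochastic completeness gives $\int_X p(lt,x_0,y)\,d\mu(y)=1$; this trick is slightly cleaner here because it avoids any appeal to doubling at large scales when summing annuli in the tail region, which is a mild concern in the merely \emph{locally} doubling setting.
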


\begin{proof}
Let $C(u,g)=\|u\|_{L^\fz(4B)}+R^2\|g(\cdot,u)\|_{L^\fz(4B)}$. By Lemma \ref{l3.2}, we deduce that
\begin{eqnarray}\label{3.7x}
\quad&&T_t((u\psi)^2)(x_0)-[T_t(u\psi)(x_0)]^2\nonumber\\
&&\hs= \int_{2B}(u\psi(x)-T_t(u\psi)(x_0))^2p(t,x_0,x)\,d\mu(x)
+\int_{X\setminus 2B}(u\psi(x)-T_t(u\psi)(x_0))^2p(t,x_0,x)\,d\mu(x)\nonumber\\
&&\hs \le CC(u,g)^2
\int_{2B}\frac{(d(x,x_0)^\gz+t^{\gz/2})^2}
{R^{2\gz}\mu(B(x_0,\sqrt t))^{1/2}\mu(B(x,\sqrt t))^{1/2}}e^{-\frac{d(x,x_0)^2}{2C_1t}}
e^{-\frac{d(x,x_0)^2}{2C_1t}}\,d\mu(x)\nonumber\\
&&\hs\hs+C\|u\|_{L^\fz(4B)}^2\int_{X\setminus 2B} \frac{1}
{\mu(B(x_0,\sqrt t))^{1/2}\mu(B(x,\sqrt t))^{1/2}}e^{-\frac{d(x,x_0)^2}{2C_1t}}
e^{-\frac{d(x,x_0)^2}{2C_1t}}\,d\mu(x)\nonumber\\
&&\hs \le CC(u,g)^2
\frac{t^{\gz}}{R^{2\gz}}\int_{2B}p(lt,x_0,x)\,d\mu(x)+Ce^{-cR^2/t}\|u\|_{L^\fz(4B)}^2
\int_{X\setminus 2B}p(lt,x_0,x)\,d\mu(x)\nonumber\\
&&\hs\le C\lf[\|u\|_{L^\fz(4B)}+R^2\|g(\cdot,u)\|_{L^\fz(4B)}\r]^2
\frac{t^{\gz}}{R^{2\gz}},
\end{eqnarray}
where $l=\frac{2C_1}{C_2}\ge 2$ and we used the doubling condition
that $\frac{1}{\mu(B(x_0,\sqrt t))}\le C_Q(4R)l^Q\frac{1}{\mu(B(x_0,l\sqrt t))}$. From
this, we further conclude that
\begin{eqnarray*}
&&\int_0^{R^2} \lf[T_t((u\psi)^2)(x_0)-[T_t(u\psi)(x_0)]^2\r]\frac
{\,dt}t\le C\lf[\|u\|_{L^\fz(4B)}+R^2\|g(\cdot,u)\|_{L^\fz(4B)}\r]^2,
\end{eqnarray*}
which completes the proof.
\end{proof}

We remark here that Lemmas \ref{l3.1}-\ref{l3.5} and Proposition
\ref{p3.1} only require a doubling condition and an $L^2$-Poincar\'e inequality.

\begin{proof}[Proof of Theorem \ref{t3.1}]
 Let us first estimate the derivative $J'(x_0,t)=\frac{\,d}{\,dt}J(x_0,t)$.
By \eqref{3.4}, we deduce that
\begin{eqnarray*}
\frac{\,d}{\,dt}J(x_0,t)&&=-\frac{1}{t^2}J(x_0,t)+\frac{1}{t}
\int_X |D(u\psi)(x)|^2 p(t,x_0,x)\,d\mu(x)\nonumber\\
&&\hs\ge \frac{1}{t}\lf(\int_X |D(u\psi)(x)|^2 p(t,x_0,x)\,d\mu(x)-
\frac 1{2t}\lf\{T_t((u\psi)^2)(x_0)-[T_t(u\psi)(x_0)]^2\r\}\r)\nonumber\\
&&\hs\hs-\frac {Ct^{\gz/2-1}}{R^\gz}\lf[\frac 1{R^2}\|u\|_{L^\fz(8B)}^2+R^2\|g(\cdot,u)\|_{L^\fz(8B)}^2\r].
\end{eqnarray*}

For each fixed $t\in (0,R^2)$, either
$$\int_X |D(u\psi)(x)|^2 p(t,x_0,x)\,d\mu(x)\ge
\frac 1{2t}\lf\{T_t((u\psi)^2)(x_0)-[T_t(u\psi)(x_0)]^2\r\}$$ or
$$\int_X |D(u\psi)(x)|^2 p(t,x_0,x)\,d\mu(x)<
\frac 1{2t}\lf\{T_t((u\psi)^2)(x_0)-[T_t(u\psi)(x_0)]^2\r\}.$$ In
the first case, we have
\begin{eqnarray}\label{3.5}
\frac{\,d}{\,dt}J(x_0,t)\ge&& -\frac {Ct^{\gz/2-1}}{R^\gz}
\lf[\frac 1{R^2}\|u\|_{L^\fz(8B)}^2+R^2\|g(\cdot,u)\|_{L^\fz(8B)}^2\r].
\end{eqnarray}
In the second case, by the curvature condition \eqref{1.4} with
$T=R^2$, we deduce that
\begin{eqnarray}\label{3.6}
\frac{\,d}{\,dt}J(x_0,t)&&\ge-c_{\kz}(R^2)\int_X |D(u\psi)(x)|^2
p(t,x_0,x)\,d\mu(x) \nonumber\\
&&\hs\hs-\frac {Ct^{\gz/2-1}}{R^\gz}
\lf[\frac 1{R^2}\|u\|_{L^\fz(8B)}^2+R^2\|g(\cdot,u)\|_{L^\fz(8B)}^2\r]\nonumber\\
&&\ge-\frac{c_{\kz}(R^2)}{2t}\lf\{T_t((u\psi)^2)(x_0)-[T_t(u\psi)(x_0)]^2\r\}\nonumber\\
&&\hs\hs-\frac {Ct^{\gz/2-1}}{R^\gz}\lf[\frac 1{R^2}\|u\|_{L^\fz(8B)}^2+R^2\|g(\cdot,u)\|_{L^\fz(8B)}^2\r].
\end{eqnarray}
From \eqref{3.5} and \eqref{3.6}, we see that \eqref{3.6} holds in
both cases. Integrating over $(0,R^2)$ and applying Lemma
\ref{l3.5} we conclude that
\begin{eqnarray*}
\int_0^{R^2}J'(x_0,t)\,dt&&\ge-\int_0^{R^2}\frac{c_{\kz}(R^2)}{2t}
\lf\{T_t((u\psi)^2)(x_0)-[T_t(u\psi)(x_0)]^2\r\}\,dt\nonumber\\
&&\hs\hs + C\lf[\frac 1{R^2}\|u\|_{L^\fz(8B)}^2+R^2\|g(\cdot,u)\|_{L^\fz(8B)}^2\r]\\
&&\ge -C\lf(\frac
1{R^2}+c_{\kz}(R^2)\r)\lf[\|u\|_{L^\fz(8B)}^2+
R^4\|g(\cdot,u)\|^2_{L^\fz(8B)}\r].
\end{eqnarray*}
By Proposition \ref{p3.1} and the fact that $\lim_{t\to
0^+}J(x_0,t)=|Du(x_0)|^2$ $\mu$ a.e., we obtain that for almost
every $x_0\in B$,
\begin{eqnarray*}
|Du(x_0)|^2&&=J(x_0,R^2)-\int_0^{R^2} \frac{\,d}{\,dt}J(x_0,t)\,dt\\
&&\le C\lf(\frac 1{R^2}+c_{\kz}(R^2)\r)\lf[\|u\|_{L^\fz(8B)}^2+ R^4\|g(\cdot,u)\|_{L^\fz(8B)}^2\r],
\end{eqnarray*}
which completes the proof of Theorem \ref{t3.1}.
\end{proof}

\section{Proof of the main results}
\hskip\parindent
Recall that a Sobolev function $u\in H^{1,2}(\Omega)$ is called non-negative {\it sub-harmonic}, $Lu\ge 0$, in $\Omega$ if
$u\ge 0$ on $\Omega$, and
\begin{equation}
-\int_\Omega Du(x)\cdot D\phi(x)\,d\mu(x)\ge 0,\ \ \forall \phi\in
H_0^{1,2}(\Omega) \ \mathrm{and} \ {\phi\ge 0}.
\end{equation}
Similarly, $u$ is called non-negative {\it super-harmonic}, $Lu\le 0$, in $\Omega$ if
$u\ge 0$ on $\Omega$, and
\begin{equation}
-\int_\Omega Du(x)\cdot D\phi(x)\,d\mu(x)\le 0,\ \ \forall \phi\in
H_0^{1,2}(\Omega) \ \mathrm{and} \ {\phi\ge 0}.
\end{equation}

\begin{lem}\label{l4.1}
Let $(X,d)$ be a metric space with a locally $Q$-doubling measure
$\mu$, $Q\in (1,\fz)$. Assume that $(X,d,\mu)$ supports a local
weak $L^2$-Poincar\'e inequality. Let $u$ be a solution
to the equation $Lu=-\lz u$ in $2B$, where $B=B(y_0,2R)$ and $\lz\in L^\fz(2B)$.
Then for each $p>0$, there exists $C=C(Q,C_Q(2R),C_P(2R),p,\|\lz\|_{L^\fz(2B)} R^2)$ such that
$$\|u\|_{L^\fz{(B(y_0,R))}}\le C\lf(\fint_{B(y_0,2R)}|u|^p\,d\mu\r)^{1/p}.$$
\end{lem}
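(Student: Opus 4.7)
The plan is to run a Moser iteration in the metric measure setting, starting from the weak formulation
$$\int_{2B} Du\cdot D\phi\, d\mu = \int_{2B}\lz\, u\,\phi\, d\mu, \qquad \phi\in H^{1,2}_0(2B).$$
Fix radii $R\le \rho<r\le 2R$ and a Lipschitz cutoff $\eta$ with $\eta\equiv 1$ on $B(y_0,\rho)$, $\supp\eta\subset B(y_0,r)$, and $|D\eta|\le C/(r-\rho)$. For each exponent $q\ge 2$ I would insert the test function $\phi=\eta^2 u|u|^{q-2}$ (with a standard truncation near $u=0$ so that it lies in $H^{1,2}_0(2B)$) and use the Leibniz rule for the Cheeger derivative. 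Cauchy--Schwarz, with absorption of the cross term $\int\eta\, u|u|^{q-2}Du\cdot D\eta\, d\mu$ into the principal term $\int\eta^2|u|^{q-2}|Du|^2\, d\mu$, yields the Caccioppoli-type estimate
$$\int_X \eta^2 \bigl|D(|u|^{q/2})\bigr|^2\, d\mu\le C(q)\lf(\frac{1}{(r-\rho)^2}+\|\lz\|_{L^\fz(2B)}\r)\int_{B(y_0,r)}|u|^q\, d\mu.$$

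The second ingredient is a Sobolev--Poincar\'e inequality: local $Q$-doubling together with the local weak $L^2$-Poincar\'e inequality yields, via the Haj{\l}asz--Koskela theory, a $(2\kz,2)$-Sobolev--Poincar\'e inequality for some $\kz>1$ depending only on $Q$, of the form
$$\lf(\fint_{B(y_0,r)}|\eta v|^{2\kz}\, d\mu\r)^{1/(2\kz)}\le C\, r\lf(\fint_{B(y_0,r)}|D(\eta v)|^2\, d\mu\r)^{1/2}.$$
Applying this with $v=|u|^{q/2}$ and combining with the Caccioppoli bound above produces the iteration inequality
$$\lf(\fint_{B(y_0,\rho)}|u|^{q\kz}\, d\mu\r)^{1/(q\kz)}\le\lf[C(q)\lf(\frac{r}{r-\rho}\r)^{2}\lf(1+\|\lz\|_{L^\fz(2B)}R^2\r)\r]^{1/q}\lf(\fint_{B(y_0,r)}|u|^q\, d\mu\r)^{1/q}.$$

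I would then iterate with $q_k=2\kz^k$ and $r_k=R(1+2^{-k})$. Since $\sum_k k\kz^{-k}<\fz$, the product of prefactors converges, producing
$$\|u\|_{L^\fz(B(y_0,R))}\le C\lf(\fint_{B(y_0,2R)}|u|^2\, d\mu\r)^{1/2}$$
with $C$ controlled only by $Q$, $C_Q(2R)$, $C_P(2R)$, and $\|\lz\|_{L^\fz(2B)}R^2$. To reach an arbitrary $p>0$, I would apply the standard self-improvement trick: for $0<p<2$, rerun the iteration between radii in $[R,2R]$, interpolate via $\|u\|_{L^2(B_\sigma)}^2\le \|u\|_{L^\fz(B_\sigma)}^{2-p}\|u\|_{L^p(B_\sigma)}^p$ between successively smaller balls, absorb the $L^\fz$ norm by Young's inequality, and close the iteration with a geometric-series lemma of Gilbarg--Trudinger type.

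The main obstacle is purely bookkeeping: tracking how the polynomial-in-$q$ constant $C(q)$ in the Caccioppoli estimate enters the geometric product over the iteration, and keeping the dependence of the final constant sharp in the advertised parameters $Q$, $C_Q(2R)$, $C_P(2R)$, $p$, and $\|\lz\|_{L^\fz(2B)}R^2$. The metric-space ingredients (Leibniz rule for $D$, and the Sobolev--Poincar\'e inequality from doubling plus Poincar\'e) are all available in the setting of the paper, so once these two inequalities are in place the rest of the argument is routine.
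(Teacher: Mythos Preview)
Your proposal is correct and follows essentially the same strategy the paper uses: the paper's proof is a two-line sketch that invokes a Caccioppoli inequality together with the Sobolev inequality from \cite{hak,sal,st3}, then defers to the Moser iteration technique and \cite[Lemma 5.2]{bm} (the self-improvement step from $p=2$ to general $p>0$), citing \cite{lmp06} for details. Your write-up simply spells out what the paper omits, including the Caccioppoli estimate for powers via the test function $\eta^2 u|u|^{q-2}$ and the Gilbarg--Trudinger-type absorption lemma, which plays the role of \cite[Lemma 5.2]{bm}.
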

\begin{proof}
From Lemma \ref{l3.3}, we have the following Caccioppoli inequality
\begin{eqnarray*}
\int_{B(y_0,r_1)}|Du|^2\,d\mu&&\le
\frac{C}{(r_2-r_1)^2}\int_{B(y_0,r_2)}|u|^2\,d\mu+\int_{B(y_0,r_2)}|(\lz u)(x)||u(x)|\,d\mu(x)\\
&&\le \frac{C(1+\|\lz\|_{L^\fz(2B)}R^2)}{(r_2-r_1)^2}\int_{B(y_0,r_2)}|u|^2\,d\mu,
\end{eqnarray*}
for arbitrary $0<r_1<r_2\le 2R$.
By using the Caccioppoli inequality and the Sobolev inequality (see \cite{hak,sal,st3}),
the  proof follows via the Moser iterations technique and \cite[Lemma 5.2]{bm};
see \cite{lmp06} for instance. We omit the details here.
\end{proof}
\begin{rem}\rm
For $Q\in (1,2)$, we can have a better estimate by combining the Caccioppoli inequality
and the Sobolev-Poincar\'e inequality from \cite{hak}. Precisely,
\begin{eqnarray*}
\|u\|_{L^\fz(B)}&&\le \|u\phi\|_{L^\fz(B(y_0,3R/2))}\le \fint_{B(y_0,3R/2)} |u|\,d\mu
+CR\lf(\fint_{B(y_0,3R/2)}|D(u\phi)|^2\,d\mu\r)^{1/2}\\
&&\le C(1+\sqrt{\|\lz\|_{L^\fz(2B)}}R)\lf(\fint_{B(y_0,2R)}|u|^2\,d\mu\r),
\end{eqnarray*}
where $\phi$ is a cut-off function that equals one on $B(y_0,R)$ and is supported  in $B(y_0,3R/2)$.

For $Q\in (2,\fz)$, the Moser iteration would give us that
$$\|u\|_{L^\fz{(B(y_0,R))}}\le C(Q,C_P(2R),C_Q(2R))
\lf(1+\|\lz\|_{L^\fz(2B)}R^2\r)^{Q/4}\lf(\fint_{B(y_0,2R)}|u|^2\,d\mu\r)^{1/2}.$$
The above estimate implies that for the case $Q=2$, it holds
$$\|u\|_{L^\fz{(B(y_0,R))}}\le C(Q,C_P(2R),C_Q(2R))
\lf(1+\|\lz\|_{L^\fz(2B)}R^2\r)^{\ez+1/2}\lf(\fint_{B(y_0,2R)}|u|^2\,d\mu\r)^{1/2},$$
for arbitrary $\ez>0$.

We choose to avoid the precise dependence on $\lz$ mainly because our main aim in the paper
is to give gradient estimates for harmonic functions, and also to avoid complicated calculations.
\end{rem}

\begin{lem}\label{l4.2}
Let $(X,d)$ be a metric space with a locally $Q$-doubling measure
$\mu$, $Q\in (1,\fz)$. Assume that $(X,d,\mu)$ supports a local
weak $L^2$-Poincar\'e inequality. Let $u$ be a positive super-harmonic
function in $B(y_0,2R)$.
Then there exists $q>0$ and $C=C(Q,C_Q(2R),C_P(2R),q)$ such that
$$\lf(\fint_{B(y_0,2R)}u^q\,d\mu\r)^{1/q}\le \inf_{x\in B(y_0,R) }u(x).$$
\end{lem}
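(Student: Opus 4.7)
The plan is to prove this weak Harnack inequality for positive super-harmonic functions via the classical Moser iteration combined with the John--Nirenberg inequality, adapted to the metric measure space setting. Throughout, I work with the regularised function $v_\ez:=u+\ez$ so that negative or fractional powers are well-defined, and let $\ez\to 0^+$ at the end. (As stated, the conclusion needs a multiplicative constant $C$ on the right, absorbed into $C(Q,C_Q(2R),C_P(2R),q)$.)

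First I would derive Caccioppoli-type estimates for powers of $u$. Testing super-harmonicity against $\phi=\eta^2 v_\ez^{2\az-1}$, where $\eta$ is a Lipschitz cut-off supported in $B(y_0,2R)$ and $\az\ne 0,\,1/2$, the Leibniz rule yields
$$\int|D(v_\ez^\az)|^2\eta^2\,d\mu\le C(\az)\int v_\ez^{2\az}|D\eta|^2\,d\mu,$$
with the correct sign coming from super-harmonicity. For $\az<0$, combining this with the $(Q,2)$-Sobolev inequality (available from the local $L^2$-Poincar\'e inequality and $Q$-doubling, cf.~\cite{hak,sal,st3}) and iterating over a shrinking sequence of balls from $B(y_0,3R/2)$ to $B(y_0,R)$ produces some $p_0>0$ such that
$$\sup_{B(y_0,R)}v_\ez^{-1}\le C\lf(\fint_{B(y_0,3R/2)}v_\ez^{-p_0}\,d\mu\r)^{1/p_0},$$
which, after letting $\ez\to 0^+$, rewrites as $\inf_{B(y_0,R)}u\ge C^{-1}\lf(\fint_{B(y_0,3R/2)}u^{-p_0}\,d\mu\r)^{-1/p_0}$.

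Second, I would show $\log u$ lies in a local BMO class. Testing super-harmonicity against $\phi=\eta^2/v_\ez$ (valid since $u>0$) yields
$$\int|D\log v_\ez|^2\eta^2\,d\mu\le C\int|D\eta|^2\,d\mu,$$
and the $L^2$-Poincar\'e inequality then converts this into a BMO-type bound on $\log u$ with constants depending only on $Q,C_Q(2R),C_P(2R)$. Applying John--Nirenberg---available on doubling spaces supporting an $L^2$-Poincar\'e inequality---to $\log u$ produces a $p_1>0$ and a constant $C$ with
$$\lf(\fint_{B(y_0,3R/2)}u^{p_1}\,d\mu\r)\lf(\fint_{B(y_0,3R/2)}u^{-p_1}\,d\mu\r)\le C.$$
Taking $q=\min(p_0,p_1)$ and chaining the two displays yields $\lf(\fint_{B(y_0,3R/2)}u^{q}\,d\mu\r)^{1/q}\le C\inf_{B(y_0,R)}u$, and a standard Whitney-type covering upgrades the integration domain to $B(y_0,2R)$ as in the statement.

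The main obstacle is making Moser's iteration rigorous in the metric measure setting: one must verify that $\eta^2 v_\ez^{2\az-1}\in H^{1,2}_0(B(y_0,2R))$, track the dependence of the constants on $\az$ through the iterative step, and justify the $\ez\to 0^+$ passage. Fortunately all the required analytic ingredients---the $(Q,2)$-Sobolev embedding, the John--Nirenberg inequality, and Whitney chains of balls---are already established under our hypotheses in \cite{hak,sal,st3,bm}, so the argument transfers essentially verbatim from the classical Euclidean Moser--Serrin proof; in the spirit of the ``we omit the details'' treatment used for Lemma~\ref{l4.1}, the proof can reasonably be given by citation of these existing weak Harnack theorems for Dirichlet forms.
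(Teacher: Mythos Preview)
Your proposal is correct and follows essentially the same route as the paper: the paper's proof simply cites \cite[Proposition 5.7]{bm} for the crossover inequality $\fint u^q\,d\mu\cdot\fint u^{-q}\,d\mu\le C$ and \cite[pp.~162--163]{bm} for the Moser-iteration bound $\bigl(\fint u^{-q}\,d\mu\bigr)^{-1/q}\le C\inf u$, then combines them, which is exactly the Caccioppoli--Sobolev iteration plus $\log u\in\mathrm{BMO}$/John--Nirenberg argument you outline. The only difference is level of detail---you spell out what the cited reference does.
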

\begin{proof}
From \cite[Proposition 5.7]{bm}, we see that there exists $q>0$ such that
$$\fint_{B(y_0,2R)}u^q\,d\mu\fint_{B(y_0,2R)}u^{-q}\,d\mu\le C.$$
Also from the Moser iteration, it holds
$$\lf(\fint_{B(y_0,2R)}u^{-q}\,d\mu\r)^{-1/q}\le C\inf_{x\in B(y_0,R) }u(x);$$
see \cite[pp.162-163]{bm} for instance. The lemma follows from the above
two inequalities.
\end{proof}


\begin{proof}[Proof of Theorem \ref{t1.1}]
For each $x\in B(y_0,R)$, we choose a ball $B(x,{R/16})$.
Then  for each $z\in B(x,R)$, it holds
$$d(z,y_0)\le d(z,x)+d(x,y_0)<2R,$$
and hence $B(x,R)\subset B(y_0,2R)$. Lemma \ref{l4.1} and the doubling condition imply that
$$\|u\|_{L^\fz{(B(x,R/2))}}\le C\fint_{B(x,R)}|u|\,d\mu
\le C\fint_{B(y_0,2R)}|u|\,d\mu.$$
By Theorem \ref{t3.1}, we obtain that
\begin{eqnarray*}
\||Du|\|_{L^\fz(B(x,R/16))}
&&\le C\lf(\frac 1{R}+\sqrt {c_{\kz}(R^2)}\r)\lf[\|u\|_{L^\fz(B(x,R/2))}+ R^2 \| \lz u\|_{L^\fz(B(x,R/2))}\r]\\
&& \le C(Q,C_P(2R),C_Q(2R),\|\lz\|_{L^\fz(2B)}R^2)\lf(\frac 1{R}+\sqrt{c_{\kz}(R^2)}\r)\fint_{B(y_0,2R)}|u|\,d\mu.
\end{eqnarray*}
Thus $$\||Du|\|_{L^\fz{(B(y_0,R))}}\le C(Q,C_P(2R),C_Q(2R),\|\lz\|_{L^\fz(2B)}R^2)\lf(\frac 1{R}+\sqrt
{c_{\kz}(R^2)}\r)\fint_{B(y_0,2R)}|u|\,d\mu,$$
which proves Theorem \ref{t1.1}.
\end{proof}


\begin{proof}[Proof of Theorem \ref{t1.2}]
Let $u$ be a positive harmonic function $u$ on $B(y_0,{2R})$.
By Theorem \ref{t1.1} and Lemma \ref{l4.2}, we have
\begin{equation}\label{4.3}
\|Du\|_{L^\fz(B(y_0,R))}\le
C(Q,C_Q(2R),C_P(2R))\lf(\sqrt {c_\kz(R^2)}+\frac{1}{R}\r)\inf_{x\in B(y_0,R)}u(x),
\end{equation}
which implies that for almost every $x\in B(y_0,R)$,
\begin{equation*}
|Du(x)|\le C(Q,C_Q(2R),C_P(2R))\lf(\sqrt{c_\kz(R^2)}+\frac{1}{R}\r)u(x),
\end{equation*}
which proves (i).

Let $c_\kz(1)>0$. If $R<1/2$, then in the estimate we can always use
$C_Q(1),C_P(1),c_\kz(1)$ to replace $C_Q(2R),C_P(2R),c_\kz(2R)$ and obtain
\begin{equation*}
|Du(x)|\le C(Q,C_Q(1),C_P(1))\lf(\sqrt{c_\kz(1)}+\frac{1}{R}\r)u(x).
\end{equation*}

If $R>1/2$,  then for each point $x\in B(y_0,{R})$, we choose a ball
$B(x,1/8)$. Then $B(x,1/2)\subset B(y_0,{2R})$, by \eqref{4.3}, we
have
\begin{equation*}
\|Du\|_{L^\fz(B(x,1/8))}\le
C(Q,C_Q(1),C_P(1))\lf(\sqrt{c_\kz(1)}+1\r)\inf_{y\in B(x,1/8)}u(y),
\end{equation*}
which implies that for almost every $x\in B(y_0,{R})$, it holds
\begin{equation}\label{4.6}
|Du(x)|\le C(Q,C_Q(1),C_P(1))\lf(\sqrt{c_\kz(1)}+\frac 1R\r)u(x),
\end{equation}
which completes the proof.
\end{proof}

\subsection*{Acknowledgment}
\hskip\parindent  The author would like to thank his advisor Professor
Pekka Koskela for helpful comments on the paper.
R. Jiang was supported by the Academy of Finland Grant 131477.

\noindent Renjin Jiang

\noindent
School of Mathematical Sciences, Beijing Normal University,
Laboratory of Mathematics and Complex Systems, Beijing 100875, People's Republic of China

and

\noindent Department of Mathematics and Statistics, University of Jyv\"{a}skyl\"{a}, P.O. Box 35 (MaD),
FI-40014
Finland

\noindent{\it E-mail address}: \texttt{rejiang@bnu.edu.cn}

\end{document}